\newcommand{\nc}{\newcommand}
\nc{\al}{\alpha}
\nc{\bt}{\beta}
\nc{\Lb}{\Lambda}
\nc{\gm}{\gamma}
\nc{\Gm}{\Gamma}
\nc{\vf}{\varphi}
\nc{\ve}{\varepsilon}
\nc{\sg}{\sigma}
\nc{\om}{\omega}
\nc{\tht}{\theta}
\nc{\Tht}{\Theta}
\nc{\hsm}{\,{-\hskip-3.2mm{\small>}}\,}
\nc{\hsn}{{\langle-\hskip-2mm{\small>}\rangle}}
\nc{\lb}{\lambda}
\nc{\dl}{\delta}
\nc{\vkp}{\varkappa}
\nc{\all}{\allowdisplaybreaks}
\nc{\wt}{\widetilde}
\nc{\os}{\overset}
\nc{\ol}{\overline}
\nc{\lr}{\longrightarrow}
\nc{\tm}{\times}
\nc{\cT}{\textrm{\!\textcalligra T\;\;}}
\nc{\cC}{\textrm{\!\textcalligra C\;\;}}
\nc{\ha}{\hookrightarrow}
\nc{\cE}{\mathcal{E}}
\nc{\rt}{\rightarrowtail}
\nc{\ta}{\twoheadrightarrow}
\nc{\bR}{\Bbb{R}}
\nc{\bP}{\Bbb{P}}
\nc{\bF}{\Bbb{F}}
\nc{\bQ}{\Bbb{Q}}
\nc{\btd}{\bigtriangledown}
\nc{\btu}{\bigtriangleup}
\nc{\pa}{\partial}
\nc{\sbs}{\subset}
\nc{\ul}{\underline}
\numberwithin{equation}{section}
\newtheorem{theo}[equation]{Theorem}
\newtheorem{lem}[equation]{Lemma}
\newtheorem{prop}[equation]{Proposition}
\newtheorem{cor}[equation]{Corollary}
\theoremstyle{definition}
\newtheorem{defi}[equation]{Definition}
\newtheorem{remk}[equation]{Remark}
\newtheorem{exmp}[equation]{Example}
\newtheorem{paragr}[equation]{}
\newtheorem{numbered paragraph}[equation]{}
\newcommand{\Ker}{\operatorname{Ker}}
\thanks{This work was supported by Shota Rustaveli National Science Foundation Grant DI/18/5-113/13.}
\begin{document}

\def\N{{\mathbb N}}
\def\Z{{\mathbb Z}}

\title[]
{Cohomology monoids of monoids with coefficients in semimodules II}

\author{Alex Patchkoria}

\subjclass[2000]{18G99, 16Y60, 20M50, 20E22.}

\keywords{monoid, semimodule, chain complex,
cohomology monoid, Schreier extension, factor set, abstract kernel}

\maketitle

\begin{abstract}
We relate the old and new cohomology monoids of an arbitrary monoid $M$ with coefficients in semimodules over $M$, introduced in the author's previous papers, to monoid and group extensions. More precisely, the old and new second cohomology monoids describe Schreier extensions of semimodules by monoids,  and the new third cohomology monoid is related to a certain group extension problem.
\end{abstract}

\setcounter{section}{0}

\vspace{0.5cm}

\vskip+7mm

\section{Introduction}
In \cite{P1,P2}, in order to give a cohomological description of Schreier extensions of semimodules by monoids, we introduced cohomology monoids of an arbitrary monoid $M$ with coefficients in semimodules over $M$. Later, in \cite{HLS}, developing the basic idea of  Sweedler's  two-cocycles of \cite{S}, Haile, Larson and Sweedler introduced the same cohomology monoids (in the case where $M$ is a group) as a generalization of the usual Amitsur and Galois cohomology groups. Next, in \cite{P3,P4,P5} we developed a version of homological algebra for semimodules which has many advantages over the old one, used in \cite{P1,P2,HLS}, from the point of view of applications in algebraic topology (see \cite{P3,P4,P6} for details). The new version of homological algebra for semimodules gave rise to new cohomology monoids of an arbitrary monoid $M$ with coefficients in semimodules over $M$. In \cite{P6}, we introduced these cohomology monoids and showed that they are more adequate for actual computations. In particular, we calculated them in the case where $M$ is a finite cyclic group by using the technique of free resolutions.

Let $M$ be a monoid and $A$ a (left) $M$-semimodule, and let $\mathscr{H}^n(M,A)$ and $H^n(M,A)$ denote, respectively,  the old and new cohomology monoids with coeffcients in the $M$-semimodule A. In the present paper, we relate $H^2(M, A)$ to Schreier extensions of $A$ by $M$. A cohomological classification of Schreier extensions of $A$ by $M$ via $\mathscr{H}^2(M,A)$, which is part of the author's unpublished thesis \cite{PhD}, is also given. The paper is completed with an application of $H^3(\Pi,A)$, where $\Pi$ is a group and $A$ a $\Pi$-semimodule, to a certain group extension problem.

\section{Preliminaries}
A semiring $\Lb=(\Lb,+\,,0,\cdot\,,1)$ is an algebraic structure
in which $(\Lb,+\,,0)$   is an abelian monoid, $(\Lb,\cdot\,,1)$ a
monoid, and
\begin{align*}
\lb\cdot(\lb'+\lb'')&=\lb\cdot\lb'+\lb\cdot\lb'',\\
(\lb'+\lb'')\cdot\lb&=\lb'\cdot\lb+\lb''\cdot\lb,\\
\lb\cdot 0=0\cdot\lb&=0
\end{align*}
for all $\lb,\lb',\lb''\in\Lb$ (see e.g. \cite{Gol}).
\vskip+0.8mm
Let $\Lb$ be a semiring. An abelian monoid $A=(A,+\,,0)$ together
with a map $\Lb\tm A\lr A$, written as $(\lb,a)\mapsto\lb a$, is
called a (left) $\Lb$-semimodule if
\begin{align*}
\lb(a+a')&=\lb a+\lb a',\\
(\lb+\lb')a&=\lb a+\lb' a,\\
(\lb\cdot \lb')a&=\lb(\lb' a),\\
1a=a,&\quad 0a=0
\end{align*}
for all $\lb,\lb'\in \Lb$ and $a,a'\in A$. It immediately follows that $\lb 0=0$ for any $\lb\in\Lb$.
\vskip+0.8mm

A map $f:A\lr B$ between $\Lb$-semimodules $A$ and $B$ is called
a $\Lb$-homomorphism if $f(a+a')=f(a)+f(a')$ and $f(\lb a)=\lb
f(a)$ for all $a,a'\in A$ and  $\lb\in\Lb$. It is obvious that
any $\Lb$-homomorphism carries 0 into 0.
\vskip+0.8mm
 A $\Lb$-subsemimodule $A$ of a   $\Lb$-semimodule $B$ is a subsemigroup of $(B,+)$ such that
$\lb a\in A$  for all    $a\in A$ and $\lb\in\Lb$. Clearly, $0\in A$.
\vskip+0.8mm
Let $\N$ be the semiring of nonnegative integers. An $\N$-semimodule $A$ is simply an abelian
monoid, an $\N$-homomorphism $f:A\lr B$ is just a homomorphism of abelian monoids, and
$C$ is an $\N$-subsemimodule of an $\N$-semimodule $D$ if and only if $C$ is a submonoid of
the monoid $(D,+\,,0)$.
\vskip+0.8mm
An equivalence relation $\rho$ on a $\Lb$-semimodule $A$ is said to be a congruence if it preserves the $\Lb$-semimodule structure (i.e., $a\rho a'$ and $b\rho b'$ in $A$ imply $(a+b)\rho (a'+b')$ and $(\lambda a) \rho (\lambda a')$ for all $\lambda\in\Lambda$). In this case, the quotient set $A/\rho$ is in fact a $\Lb$-semimodule
($cl(a)+cl(a')=cl(a+a'), \lambda cl(a)=cl(\lambda a)$ for all $a,a'\in A$ and $\lambda\in\Lambda$), called the quotient $\Lb$-semimodule of $A$ by $\rho$.
\vskip+0.8mm
Next recall that the group completion of an abelian monoid $S$ can
be constructed in the following way.
Define an equivalence relation $\sim$ on $S\tm S$ as follows:
$$(u,v)\sim(x,y)\Leftrightarrow u+y+z=v+x+z\quad\text{for some}\quad z\in S.$$
Let $[u,v]$ denote the equivalence class of $(u,v)$. The quotient set
$(S\tm S)/\!\!\sim$ with the addition $[x_1,y_1]+[x_2,y_2]=[x_1+x_2,y_1+y_2]$ is an abelian group
$(0=[x,x],\;-[x,y]=[y,x])$. This group, denoted by $K(S)$, is the group completion of $S$,
and $k_S:S\lr K(S)$ defined by $k_S(x)=[x,0]$ is the canonical homomorphism. If $S$ is
a semiring, then the multiplication $[x_1,y_1]\cdot[x_2,y_2]=[x_1x_2+y_1y_2,x_1y_2+y_1x_2]$
converts  $K(S)$ into the ring completion of the semiring $S$, and $k_S$ into the canonical
semiring homomorphism. Now assume that $A$ is a $\Lb$-semimodule. Then $K(A,+,0)$ with the
multiplication $[\lb_1,\lb_2][a_1,a_2]=[\lb_1a_1+\lb_2a_2,\lb_1a_2+\lb_2a_1]$,
$\lb_1,\lb_2\in\Lb$, $a_1,a_2\in A$,  becomes a $K(\Lb)$-module. This $K(\Lb)$-module,
denoted by $K(A)$, is the $K(\Lb)$-module completion of the $\Lb$-semimodule $A$, and
$k_A=k_{(A,+,0)}$ is the canonical $\Lb$-homomorphism. Clearly,
$K(A)$ is an additive functor: for any homomorphism
$f:A\lr B$ of $\Lb$-semimodules, $K(f):K(A)\lr K(B)$ defined by
$K(f)([a_1,a_2])=[f(a_1),f(a_2)]$ is a $K(\Lb)$-homomorphism.
\vskip+0.8mm

A $\Lb$-semimodule $A$ is said to be cancellative if whenever  $a+a'=a+a'',\;a,a',a''\in A$,
one has $a'=a''$. Obviously, $A$ is cancellative if and only if the canonical
$\Lb$-homomorphism $k_A:A\lr K(A)$ is injective. Consequently, for a cancellative $\Lb$-semimodule $A,$ one may assume that $A$ is a $\Lb$-subsemimodule of $K(A)$, and that each element $b$ of $K(A)$ is a difference of two elements from $A$, i.e., $b=a_1-a_2$, where $a_1, a_2 \in A.$
\vskip+0.8mm

A $\Lb$-semimodule $A$ is called a $\Lb$-module if $(A,+,0)$ is an
abelian group. One can easily see that  $A$ is a $\Lb$-module  if
and only if $A$ is a $K(\Lb)$-module. Hence, if $A$ is a
$\Lb$-module, then $K(A)=A$ and $k_A=1_A$. For a $\Lb$-semimodule $A$, by $U(A)$ we denote the maximal $\Lb$-submodule of $A$, i.e., $$U(A)=\{a \in A \;\vert \; a+a'=0\; \;\text{for some}\; \; a' \in A\}.$$
\vskip+0.8mm

Let $M$ be a multiplicatively written monoid. The free abelian monoid $\N[M]$ generated by the elements $x\in M$ consists of the formal
sums $\sum\limits_{x\in M}n_xx$, where $n_x\in\N$ and all but a finite number of the $n_x$ are zero.  The product in $M$ induces a product
$$\sum_{x\in M}n_xx\cdot\sum_{y\in M}n'_yy=\sum_{x,y\in M}(n_xn'_y)xy$$ of two such sums, and makes $\N [M]$ a semiring, the
monoid semiring of $M$ with nonnegative integer coefficients. Semimodules over $\N[M]$ are called $M$-semimodules.

\section{Cohomology monoids}
The main purpose of this section is to recall from \cite{P6} the notion of cohomology monoids with coefficients in semimodules.

\begin{defi}[\cite{P3}]  We say that a sequence of $\Lb$-semimodules and
$\Lb$-homomorphisms
$$\xymatrix{X:\cdots\ar@<0.45ex>[r]\ar@<-0.45ex>[r]&X_{n+1}
\ar@<0.45ex>[r]^-{\pa_{n+1}^+}\ar@<-0.45ex>[r]_-{\pa_{n+1}^-}&
X_n\ar@<0.45ex>[r]^-{\pa_n^+}\ar@<-0.45ex>[r]_-{\pa_n^-}&
X_{n-1}\ar@<0.45ex>[r] \ar@<-0.45ex>[r] & \cdots},\quad n \in \Z,$$
written $X=\{X_n,\pa_n^+,\pa_n^-\}$ for short, is a {\em chain complex} if
$$\pa_n^+\,\pa_{n+1}^++\pa_n^-\,\pa_{n+1}^-=\pa_n^+\,\pa_{n+1}^-+\pa_n^-\,
\pa_{n+1}^+$$
for each integer $n$. For every chain complex $X$, we define the $\Lb$-semi\-mo\-dule
$$Z_n(X)=\big\{x\in X_n\;|\;\pa_n^+(x)=\pa_n^-(x)\big\},$$
the $n$-{\em cycles}, and the $n$-{\em th} {\em homology} $\Lb$-{\em semimodule}
$$H_n(X)=Z_n(X)/\rho_n(X),$$
where $\rho_n(X)$ is a congruence on $Z_n(X)$ defined as follows:
\begin{align*}
x\;\rho_n(X)\;y\;\;\Leftrightarrow \;\; &x+\pa_{n+1}^+(u)+\pa_{n+1}^-(v)=y+\pa_{n+1}^+(v)+
\pa_{n+1}^-(u)\\
&\;\text{for some}\quad u,v \quad\text{in}\quad X_{n+1}.
\end{align*}
The $\Lb$-homomorphisms $\pa_n^+,\pa_n^-$ are called \emph{differentials} of the chain complex $X$. \end{defi}

\vskip+2mm
\begin{defi}[\cite{P3}] Let $X=\{X_n,\pa_n^+,\pa_n^-\}$ and
$X'=\{X'_n,{\pa_n^{\,'}}^+,{\pa_n^{\,'}}^-\}$ be chain complexes of
$\Lb$-semimodules. We say that a sequence $f=\{f_n\}$ of $\Lb$-homomorphisms $f_n:X_n\lr
X'_n$ is a $\pm$-{\em morphism} from $X$ to $X'$  if
$$f_{n-1}\pa_n^+={\pa_n^{\,'}}^+f_n\quad\text{and}\quad f_{n-1}\pa_n^-={\pa_n^{\,'}}^-f_n
\quad\text{for all}\quad n. $$ \end{defi}
If  $f=\{f_n\}:X\lr X'$ is a $\pm$-morphism of chain complexes, then
$f_n(Z_n(X))\sbs Z_n(X')$, and the map
$$H_n(f):H_n(X)\lr H_n(X'),\;\;H_n(f)(cl(x))=cl(f_n(x)),$$
is a homomorphism of $\Lb$-semimodules. Thus $H_n$ is a covariant additive functor from the
category of chain complexes and their $\pm$-morphisms to the category of $\Lb$-semimodules.
\vskip+4mm

\begin{paragr}[\cite{P3}]\label{ipl}{\em A cochain complex} is a sequence of $\Lb$-semimodules and $\Lb$-homomorphisms
$$\xymatrix{Y=\{Y^n,\delta^n_+,\delta^n_-\}:\;\;\; \cdots \ar@<0.6ex>[r] \ar@<-0.6ex>[r] & Y^{n-1} \ar@<0.6ex>[r]^-{\delta^{n-1}_{+}} \ar@<-0.6ex>[r]_-{\delta^{n-1}_{-}} & Y^n \ar@<0.6ex>[r]^-{\delta^{n}_{+}} \ar@<-0.6ex>[r]_-{\delta^{n}_{-}}  & Y^{n+1} \ar@<0.6ex>[r] \ar@<-0.6ex>[r] & \cdots }$$
with
 $$\dl_+^n\dl_+^{n-1}+\dl_-^n\dl_-^{n-1}=\dl_+^n\dl_-^{n-1}+\dl_-^n\dl_+^{n-1}$$
for all $n$. One obviously defines the $n$-{\em cocycles} of $Y$, the {\em $n$-th cohomology $\Lb$-semimodule} $H^n(Y)$, a $\pm$-morphism $g:Y \lr Y'$ of cochain
complexes and the induced $\Lb$-homomorphism $H^n(g):H^n(Y) \lr H^n(Y')$. Clearly, $H^n$ is a covariant additive functor on the category of cochain complexes and their $\pm$-morphisms with values in the category of $\Lb$-semimodules.\end{paragr}

\begin{paragr}[\cite{P3}]\label{ipli} A sequence $G=\{G^n,d^n_+,d^n_-\}$ of $\Lb$-modules and $\Lb$-homomorphisms is a cochain complex if and only if
$$\xymatrix{\{G^n,d^n_+-d^n_-\}:\;\;\;\;\;\cdots\ar[r]&G^n\ar[r]^-{d^n_+-d^n_-}&G^{n+1}\ar[r]&\cdots}$$
is an ordinary cochain complex of $\Lb$-modules. Obviously, for any cochain complex $ G=\{G^n,d^n_+,d^n_-\}$ of $\Lb$-modules, $H^*(G)$ coincides with the usual cohomology $H^*(\{G^n,d^n_+-d^n_-\})$. \end{paragr}

\begin{paragr}[\cite{P4}]\label{iplik}If $X=\{X^n,\delta^n_+,\delta^n_-\}$ is a cochain complex of $\Lb$-semimodules, then
$$\xymatrix{K(X) \colon \cdots \ar[r] & K(X^{n-1}) \ar[rrr]^-{K(\delta^{n-1}_+)- K(\delta^{n-1}_-)} & & & K(X^n) \ar[rr]^-{K(\delta^n_+)- K(\delta^n_-)} & & K(X^{n+1}) \ar[r] & \cdots}$$
is an ordinary cochain complex of $K(\Lb)$-modules (i.e., $\Lb$-modules) by \ref{ipli}. When each $X^n$ is cancellative, then the converse is also true.
Further, for any cochain complex $X=\{X^n,\delta^n_+,\delta^n_-\}$ of $\Lb$-semimodules, the canonical $\pm$-morphism $k_X=\{k_{X^n}:X^n\lr K(X^n)\}$ from $X$ to the cochain complex $\{K(X^n),K(\delta^n_+),K(\delta^n_-)\}$ induces the $\Lb$-homomorphisms $H^n(k_X):H^n(X)\lr H^n(K(X))$, $H^n(k_X)(cl(x))=cl(k_{X^n}(x))=cl[x,0]$.
If $X$ is a cochain complex of cancellative $\Lb$-semimodules, then $H^n(k_X)$ is injective and therefore $H^n(X)$ is a cancellative $\Lb$-semimodule for each $n$.
\end{paragr}
\vskip+1.5mm
\begin{paragr}\label{ipliki} In \cite{P2} and \cite{HLS}, another construction of cohomology  of a cochain complex $X=\{X^n,\delta^n_+,\delta^n_-\}$ is used, namely, $$\mathscr{H}^n(X)=Z^n(X)/\widetilde{\rho}\;^n(X),\quad n \in \Z,$$ where $Z^n(X)=\big\{x\in X^n\;|\;\delta^n_+(x)=\delta^n_-(x)\big\}$ and a congruence $\widetilde{\rho}\;^n(X)$ on the $\Lb$-semimodule $Z^n(X)$ is defined by
\begin{align*} x\;\widetilde{\rho}\;^n(X)\;y\;\;\Leftrightarrow \;\; &x=y+\delta^{n-1}_+(w)-\delta^{n-1}_-(w)\quad\text{for some}\quad w \quad\text{in}\quad U(X^{n-1}).
\end{align*} \end{paragr}
\vskip+1.5mm
Now let us recall the definition of the cohomology monoids introduced in \cite{P6}.
\vskip+1.5mm
Let $M$ be a monoid and $A$ be a (left) $M$-semimodule. Define
$$F^n(M,A)=\{f \colon M^n \lr A \; | \; f(x_1, \cdots , x_{j-1}, 1, x_{j+1}, \cdots , x_n)=0,\; j=1,2, \cdots, n\}, \; \; \; n \geq 0.$$
Clearly, $F^n(M,A)$, together with the usual addition of functions, is an abelian monoid. Next, define monoid homomorphisms $d^n_{-},\; d^n_{+} \colon F^n(M,A) \longrightarrow F^{n+1}(M,A)$ as follows:
$$(d^n_{\pm}f)(x_1, \cdots, x_{n+1})=0 , \; n \geq 0,\; \text{if any}\; x_j=1, \; j=1,2, \cdots, (n+1).$$
If each $x_j \neq 1, j=1,2, \cdots, (n+1)$, then
$$(d_{+}^{2k}f)(x_1, \cdots, x_{2k+1})=\sum_{i=1}^{k}f(x_1, \cdots, x_{2i-1}x_{2i}, \cdots, x_{2k+1}) + f(x_1, \cdots, x_{2k}), \;\; k \geq 0,$$
$$(d_{-}^{2k}f)(x_1, \cdots, x_{2k+1})= x_1f(x_2,\cdots , x_{2k+1})+ \sum_{i=1}^{k}f(x_1, \cdots, x_{2i}x_{2i+1}, \cdots, x_{2k+1}), \;\; k \geq 0,$$
$$(d_{+}^{2k-1}f)(x_1, \cdots, x_{2k})= x_1f(x_2,\cdots , x_{2k})+ $$ $$
+\sum_{i=1}^{k-1}f(x_1, \cdots, x_{2i}x_{2i+1}, \cdots, x_{2k}) + f(x_1, \cdots, x_{2k-1}), \;\; k \geq 1,$$
$$(d_{-}^{2k-1}f)(x_1, \cdots, x_{2k})= \sum_{i=1}^{k}f(x_1, \cdots, x_{2i-1}x_{2i}, \cdots, x_{2k}), \;\; k \geq 1.$$
\vskip+3mm
It is easy to see that
$$d_+^nd_+^{n-1} + d_-^nd_-^{n-1}=d_+^nd_-^{n-1}+d_-^nd_+^{n-1}$$
for all $n \geq 1$. In other words, the sequence
$$\xymatrix{ F(M,A) \colon \;\;\; 0 \ar@<0.6ex>[r] \ar@<-0.6ex>[r] & F^{0}(M,A) \ar@<0.6ex>[r]^-{d^{0}_{+}} \ar@<-0.6ex>[r]_-{d^{0}_{-}} & F^1(M,A) \ar@<0.6ex>[r] \ar@<-0.6ex>[r]  & \cdots \ar@<0.6ex>[r] \ar@<-0.6ex>[r] & F^{n}(M,A) \ar@<0.6ex>[r]^{d^{n}_{+}} \ar@<-0.6ex>[r]_-{d^{n}_{-}} & F^{n+1}(M,A) \ar@<0.6ex>[r] \ar@<-0.6ex>[r] & \cdots }$$
is a nonnegative cochain complex of abelian monoids. The {\em (normalized) $n$-th cohomology monoid $H^n(M, A)$ of $M$ with coefficients in the $M$-semimodule $A$} is defined by
$$H^n(M, A) = H^n(F(M,A)), \; \; n \geq 0.$$

It is obvious that any homomorphism $\alpha \colon A\lr B$ of $M$-semimodules induces a $\pm$-morphism $F(M,\alpha): F(M,A) \lr F(M,B), \;                      F^n(M,\alpha)(f\colon M^n \lr A)=\alpha f$. Consequently, $H^n(M, -)$ is a covariant additive functor from the category of $M$-semimodules to the category of abelian monoids (see \ref{ipl}).

\begin{paragr}\label{bip} As special cases, we have
$$H^0(M,A) = \{a \in A\; |\; xa=a \;\;\text{for all} \; x \in M\};$$

$$H^1(M,A) = \{f \colon M \lr A\; |\; f(1)=0 \; \text{and}\; xf(y)+f(x)=f(xy)\; \;\text{for all}\; x,y \in M \}/ \rho^1,$$
$$f \rho^1 f' \Leftrightarrow \exists \; a_1, a_2 \in A \; : \;  f(x)+xa_1+a_2=f'(x)+xa_2+a_1 \; \;\text{for all} \; x \in M;$$

\begin{align*} \begin{split}H^2(M,A) = \{f \colon M \times M \lr A\; |\; f(x, 1)=0=f(1,y) \; \text{and} \\ \; xf(y, z)+f(x, yz)=f(xy, z)+f(x, y)\; \;\text{for all}\; x,y,z \in M \}/ \rho^2, \end{split} \end{align*}
\begin{align*}f \rho^2 f' \Leftrightarrow \exists \; g_1, g_2\colon M \lr A\; :\; g_1(1)=0=g_2(1)\;\text{and}\; f(x,y)+xg_1(y)+g_1(x)+g_2(xy)=\\ =f'(x,y)+xg_2(y)+g_2(x)+g_1(xy) \; \;\text{for all} \; x,y \in M.  \hspace{4.23cm}\end{align*}\end{paragr}

\begin{paragr}\label{bipl} The cohomology monoids introduced in  \cite{P2} and \cite{HLS} (see the introduction of this paper) are defined as follows. Let $M$ be a monoid and $A$ an $M$-semimodule. Define
$$\mathscr{H}^n(M,A)=\big\{f\in F^n(M,A)\;|\;d^n_+(f)=d^n_-(f)\big\}/\widetilde{\rho}\;^n, \;\;\;\;n \geq 0,$$
where $\widetilde{\rho}\;^n$ is a congruence given by
\begin{align*}
f \;\widetilde{\rho}\;^n f'\;\;\Leftrightarrow \;\; &f=f'+d^{n-1}_+(g)-d^{n-1}_-(g)\quad\text{for some}\quad g \quad\text{in}\quad U(F^{n-1}(M,A))=F^{n-1}(M,U(A))
\end{align*} (see \ref{ipliki}). In particular, if  $f$ and $f'$ are $2$-cocycles, then $f \;\widetilde{\rho}\;^2 f'$  if and only if there is a function $g\colon M \lr U(A)$ with $g(1)=0$ such that
$$f(x,y)=f'(x,y)+xg(y)-g(xy)+g(x)$$
for all $x,y\in M$. \end{paragr}

\begin{paragr}\label{bipli} Let $A$ be an $M$-module, i.e., a module over $K(\N[M])= \Z[M]$, the integral monoid ring of the monoid $M$.\;Then the cohomology monoids $H^n(M, A)$ and $\mathscr{H}^n(M,A)$ both coincide with the $n$-th cohomology group of the nonnegative ordinary cochain complex $\{F^n(M,A), \;d_+^n-d_-^n\}$ of abelian groups (see \ref{ipli}), that is, with the $n$-th  Eilenberg-Mac Lane cohomology group of $M$ with coefficients in the $M$-module $A$. \end{paragr}

\begin{paragr}\label{biplik} If $A$ is a cancellative $M$-semimodule, then $H^n(M,A)$ is a cancellative abelian monoid for each $n$. Moreover, for any cancellative $M$-semimodule $A$, the homomorphism $H^n(M,k_A):H^n(M,A)\lr H^n(M,K(A))$ induced by the inclusion $ k_A : A \hookrightarrow K(A)$ is injective (cf. \ref{iplik}). \end{paragr}

For any $M$-semimodule $A$, one has a commutative diagram of abelian monoids and monoid homomorphisms
\begin{align}\xymatrix{\mathscr{H}^n(M,A)\ar[rr]^{k_A^n}\ar[dr]_{j^n} & & H^n(M,K(A))  \\ &  \ar[ru]_{\;\;\;\;H^n(M,k_A)} H^n(M,A),}\end{align}
where $H^n(M,k_A)$, \;$j^n$ and \;$k_A^n$ are defined by
$$ H^n(M,k_A)(cl(f))=cl(k_Af),\;\;\;\;j^n(cl(f))=cl(f),\;\;\;\;k_A^n(cl(f))=cl(k_Af).$$
Clearly,  $j^n$ is a surjection. If $A$ is a cancellative $M$-semimodule, then  $H^n(M,k_A)$ is an injection by \ref{biplik}, and if $A$ is an $M$-module, then all three maps are identity homomorphisms by \ref{bipli}.
\vskip+1.5mm
The cohomology monoids $\mathscr{H}^n(M,A)$ are adequate for many applications (see e.g. \cite{A,H1,H2,HLS,P1,P2}), but difficult to compute in general (see Remark 2.7 of \cite{P6}). Unlike $\mathscr{H}^n(M,A)$, the cohomology monoids  $H^n(M,A)$ can be introduced, as shown in \cite{P6}, by using an $M$-semimodule analog of the classical normalized bar resolution. This enables one to use the technique of free resolutions when calculating  $H^n(M,A)$, and therefore $H^n(M,A)$ is more computable alternative to $\mathscr{H}^n(M,A)$. In particular, we have

\begin{theo}[\cite{P6}] \label{bipliki} Let $C_m(t)$ be the multiplicative cyclic group of order $m$ on generator $t.$ If $A$ is a\; cancellative $C_m(t)$-semimodule, then

$$H^0(C_m(t),A)\cong\{a \in A \;\vert \; ta=a\},$$
\;
$$H^{2k}(C_m(t),A)\cong H^{2k}(C_m(t),K(A)), \;\;k > 0,$$
\;
$$H^{2k+1}(C_m(t),A)\cong\{a \in U(A) \;\vert \; (1+t+\cdots + t^{m-1})a=0\}/(U(A)\cap(t-1)K(A)), \;\; k \geq 0.$$

\end{theo}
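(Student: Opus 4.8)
For $n=0$ the assertion is immediate from the explicit formula for $H^0(M,A)$ in \ref{bip} with $M=C_m(t)$, so assume $n\geq1$. The plan is to compute $H^n(C_m(t),A)$ by means of a small $2$-periodic free resolution, using the free-resolution technique for computing the cohomology monoids $H^n(M,-)$ developed in \cite{P6}. Writing $N=1+t+\cdots+t^{m-1}\in\N[C_m(t)]$, I would take the complex $X=\{X_i,\pa_i^+,\pa_i^-\}$ with $X_i=\N[C_m(t)]$ for every $i\geq0$, augmentation $X_0\to\N$ the coefficient-sum homomorphism, and differentials given by left multiplication:
$$(\pa_i^+,\pa_i^-)=(t\cdot,1\cdot)\quad(i\ \text{odd}),\qquad(\pa_i^+,\pa_i^-)=(N\cdot,0)\quad(i\ \text{even},\ i>0).$$
The chain-complex identity $\pa^+\pa^++\pa^-\pa^-=\pa^+\pa^-+\pa^-\pa^+$ holds in every degree, the two nontrivial composites being multiplication by $Nt$ and by $N$ on one side and by $tN$ and by $N$ on the other, with $tN=N=Nt$. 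The step I expect to be the main obstacle is verifying that this augmented complex is a free resolution of the trivial $\N[C_m(t)]$-semimodule $\N$ in the precise sense demanded by the comparison theorem of \cite{P6}: vanishing of the homology semimodules $H_i(X)$ for $i>0$, the augmentation being the appropriate cokernel, and whatever exactness/admissibility condition that theorem imposes on complexes carrying a pair of differentials. I would carry this out degree by degree, using $tN=N=Nt$ and the cancellativity of $\N[C_m(t)]$. Granting it, \cite{P6} gives $H^n(C_m(t),A)\cong H^n(P(A))$, naturally in $A$, where $P(A)$ is obtained by applying $\Hom_{\N[C_m(t)]}(-,A)$ to $X$.

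Via the isomorphism $\Hom_{\N[C_m(t)]}(\N[C_m(t)],A)\cong A$, the cochain complex $P(A)$ has $P(A)^i=A$ for all $i\geq0$, with $(\dl^i_+,\dl^i_-)=(t\cdot,1\cdot)$ for $i$ even and $(\dl^i_+,\dl^i_-)=(N\cdot,0)$ for $i$ odd. By \ref{iplik}, its group completion $K(P(A))$ is the ordinary cochain complex $K(A)\xrightarrow{t-1}K(A)\xrightarrow{N}K(A)\xrightarrow{t-1}\cdots$, the classical periodic complex computing the Eilenberg--Mac Lane cohomology $H^n(C_m(t),K(A))$ (cf.\ \ref{bipli}); and since each $P(A)^i$ is cancellative, the canonical map $H^n(P(A))\to H^n(K(P(A)))$ is injective, again by \ref{iplik} (equivalently \ref{biplik}). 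It then remains to compute $H^n(P(A))$ for $n=1,2$ and to pass to higher $n$ by $2$-periodicity.

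For $n=1$: here $Z^1(P(A))=\{a\in A\mid Na=0\}$, and since $Na=0$ forces $a+(ta+\cdots+t^{m-1}a)=0$, this set equals $\{a\in U(A)\mid Na=0\}$; the congruence $\rho^1$ identifies $x,y$ in it exactly when $x+tu+v=y+tv+u$ for some $u,v\in A$, which (using cancellativity of $A$, so that $A\ha K(A)$ and every element of $K(A)$ is a difference of two elements of $A$) happens exactly when $x-y\in(t-1)K(A)$; as every element of $(t-1)K(A)$ is killed by $N$, this yields $H^1(P(A))\cong\{a\in U(A)\mid Na=0\}/(U(A)\cap(t-1)K(A))$. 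For $n=2$: here $Z^2(P(A))=\{a\in A\mid ta=a\}$ and $x\,\rho^2\,y$ iff $x+Nu=y+Nv$ for some $u,v\in A$; the injection $H^2(P(A))\to H^2(K(P(A)))=\{b\in K(A)\mid tb=b\}/NK(A)$ is also surjective, since for $b\in K(A)$ with $tb=b$, writing $b=a_1-a_2$ with $a_1,a_2\in A$, the element $a:=b+Na_2=a_1+(t+\cdots+t^{m-1})a_2$ lies in $A$, is fixed by $t$ (because $tb=b$ and $tN=N$), and represents the class of $b$; hence $H^2(C_m(t),A)\cong H^2(C_m(t),K(A))$ via the canonical map.

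Finally, the $2$-periodicity of $X$ (hence of $P(A)$ and of $K(P(A))$) gives $H^{2k}(C_m(t),A)\cong H^2(C_m(t),A)$ and $H^{2k+1}(C_m(t),A)\cong H^1(C_m(t),A)$ for all $k\geq1$, together with the corresponding isomorphisms for $K(A)$-coefficients; combining these with the $n=1,2$ computations above gives the three displayed isomorphisms. As an alternative to constructing and verifying $X$, one could instead exhibit mutually inverse $\pm$-morphisms between the bar-type complex $F(C_m(t),A)$ of \ref{bip} and the small complex $P(A)$ and check that they induce isomorphisms on cohomology; the verification of the requisite cocycle identities would again be the delicate part.
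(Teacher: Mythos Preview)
The paper does not itself prove this theorem: it is quoted verbatim from \cite{P6}, with the surrounding text indicating that the proof there proceeds ``by using the technique of free resolutions'' enabled by the $M$-semimodule analogue of the normalized bar resolution. Your proposal follows exactly this route---the standard $2$-periodic free resolution of $\N$ over $\N[C_m(t)]$ with differentials $(t\cdot,1\cdot)$ and $(N\cdot,0)$---and the explicit computations you give for $H^1(P(A))$ and $H^2(P(A))$ (including the surjectivity argument for $H^2$ via $a=b+Na_2$) are correct. So your approach is essentially the one the paper attributes to \cite{P6}; the only part you have left open, namely checking that $X$ satisfies the exactness hypotheses required by the comparison theorem of \cite{P6}, is indeed where the semimodule-specific work lies, and you are right to flag it.
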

Below, in Section 5, we need to know that there exists a cancellative $\Pi$-semimodule $A$, with $\Pi$ a group, such that $H^3(\Pi,A)=0$ while $H^3(\Pi,U(A))$ and $H^3(\Pi,K(A))$ both do not vanish. Therefore we conclude this section with

\begin{exmp} \label{imri} Let $m$ be a positive integer greater than 1, $\Z/m\Z$ the additive group of integers modulo $m$, $\N$ the additive monoid of non-negative integers, and let $D$ be a cancellative abelian monoid with $U(D)=0$ and satisfying the following condition: there are $d_1, d_2 \in D$ such that $d_1\neq d_2$ and $md_1=md_2$. (Example of $D$: $(\N\oplus\Z/m\Z) \backslash\{(0,1+m\Z), (0,2+m\Z),\cdots,(0,(m-1)+m\Z)\},  m(1,m\Z)=m(1,1+m\Z).)$ Define an action of $C_m(t)$ on the cancellative abelian monoid $A=D\oplus\N\oplus\Z/m\Z$ as follows:
$$t^i(d,\; n,\; p+m\Z)=(d,\;n,\;in+p+m\Z), \;\;\;d\in D,\;\; n\in \N,\;\; i,p \in\Z.$$
If $t^i=t^j$ and $p+m\Z=q+m\Z$, i.e., $i-j=rm$ and $p-q=sm$, $r, s \in \Z$, then $jn+q+m\Z=(i-rm)n+p-sm+m\Z=in+p-(rn+s)m+m\Z=in+p+m\Z$. Hence, the action is well defined. Furthermore, $A$ is a $C_m(t)$-semimodule under this action. The induced actions on $U(A)=\Z/m\Z$ and $K(A)=K(D)\oplus\Z\oplus\Z/m\Z$ are obvious: $C_m(t)$ acts on $U(A)$ trivially, and on $K(A)$ by $t^i(c,\; z,\; p+m\Z)=(c,\;z,\;iz+p+m\Z)$. It is easy to check that $\{b\in K(A)\;\vert\;(1+t+\cdots+t^{m-1})b=0\}=\{c\in K(D)\;\vert\;mc=0\}\oplus\Z/m\Z$ and  $(t-1)K(A)=\Z/m\Z$. Consequently,
$$H^{2k+1}(C_m(t),U(A))=H^{2k+1}(C_m(t),\Z/m\Z)=\Z/m\Z, \;\;\;\;k\geq0,$$
$$H^{2k+1}(C_m(t),K(A))=\{b\in K(A)\;\vert\;(1+t+\cdots+t^{m-1})b=0\}/(t-1)K(A)=$$
$$=\{c\in K(D)\;\vert\; mc=0\}, \;\;\;k\geq0,$$
and, by \ref{bipliki},
$$H^{2k+1}(C_m(t),A)=\{a\in U(A)\;\vert \;ma=0\}/(\Z/m\Z)=0, \;\;\;\; k\geq0.$$
Clearly, $H^{2k+1}(C_m(t),K(A))\neq0$ since $d_1-d_2\in K(D), \; d_1-d_2\neq0$ and $m(d_1-d_2)=0.$ Thus,
$$H^{2k+1}(C_m(t),U(A))\neq0, \;\; \;H^{2k+1}(C_m(t),K(A))\neq0, \quad\text{and}\quad  H^{2k+1}(C_m(t),A)=0,  \;\; k\geq0.$$
\end{exmp}

\vspace{1cm}
\section{Schreier extensions of semimodules by monoids}
Let $M$ be a monoid and $A$ a (left) $M$-semimodule. In this section we first give a cohomological classification of Schreier extensions of $A$ by $M$ via
$\mathscr{H}^2(M,A)$, which is part of the author's unpublished  thesis \cite{PhD}. Then, in the rest of the section, a relationship between $H^2(M, A)$
and Schreier extensions of $A$ by $M$ is described.
\begin{defi}[\cite{R,I,Str}]\label{mam} A sequence
$\xymatrix{E: A \ar@{ >-{>}}[r]^-\kappa & B \ar@{ ->>}[r]^-\sigma & M}$ of (not necessarily abelian) monoids and
monoid homomorphisms is called a (right) Schreier extension of $A$ by $M$ (some authors would say ``$M$ by $A$'') if the following conditions hold:

 \begin{enumerate}
\item \ $\kappa$ is injective, $\sigma$ is surjective, and $\kappa(A)=\Ker(\sigma)\;\;\; (\Ker(\sigma)=\{b\in B|\sigma(b)=1\})$.
\item \  For any $x \in M$, $\sigma^{-1}(x)$ contains an element $u_x$ such that for any
$b \in \sigma^{-1}(x)$ there exists a unique element $a\in A$ with $b=\kappa(a)+u_x$ (the monoid composition in $A$ and $B$ is written as addition).
\end{enumerate}
The elements $u_x$, $x\in M$, are called representatives of the extension
 $\xymatrix{E: A \ar@{ >-{>}}[r]^-\kappa & B \ar@{ ->>}[r]^-\sigma & M}$. \end{defi}
It follows in particular that $\sigma$ is a cokernel of $\kappa$.
\begin{exmp}Let $\N$ be the additive monoid of non-negative integers and let $C_m(t)$, as above, denote the multiplicative cyclic group of order $m$ on generator $t.$ The sequence of abelian monoids
$$\xymatrix{\N \ar@{ >-{>}}[r]^-{\bar{m}} & \N \ar@{ ->>}[r]^-p & C_m(t)},\;\;\;\;\;\bar{m}(1)=m,\;\;\;\;\;p(1)=t,$$
is a Schreier extension of $\N$ by $C_m(t)$. The set of representatives consists of $0, 1, ..., m-1.$
\end{exmp}
\begin{paragr} \label{mami} Suppose given a Schreier extension of monoids $\xymatrix{E: A \ar@{ >-{>}}[r]^-\kappa & B \ar@{ ->>}[r]^-\sigma & M}$ and a
representative $u_x \in B$. It follows from the above definition that an element $b\in \sigma^{-1}(x)$ serves as a representative of $E$ if and only if there is $a\in U(A)$ such that $b=\kappa(a)+u_x$. In particular, if $\xymatrix{E:A \ar@{ >-{>}}[r]^-\kappa & B \ar@{ ->>}[r]^-\sigma & M}$ is a Schreier extension with $A$ a group, then any $b\in B$ is a representative of the extension $E$.
\end{paragr}
\begin{paragr}\label{mamik} A morphism from $\xymatrix{E:A\;\ar@{>->}[r]^-\kappa&B\ar@{->>}[r]^-\sigma&M}$ to $\xymatrix{E':A'\;\ar@{>->}[r]^-{\kappa'}&B'\ar@{->>}[r]^{\sigma'}&M'}$ is a triple of monoid homomorphisms $(\al,\bt,\gm)$ such that
$$  \xymatrix{E:\hskip-13mm&A\;\ar[d]_\al\ar@{>->}[r]^-{\kappa}&
B\ar@{->>}[r]^\sigma\ar[d]_\bt&M\ar[d]_\gm\\
E':\hskip-12mm&A'\;\ar@{>->}[r]^-{\kappa'}&B'\ar@{->>}[r]^{\sigma'}&M'}$$
is a commutative diagram, and $\bt$ preserves representatives, that is, for any representative $u_x$ of $E$, $\beta(u_x)$ is a representative of $E'$. It follows from \ref{mami} that $\bt$ in the above commutative diagram preserves representatives if and only if for each $x\in M$ there exists a representative $u_x\in \sigma^{-1}(x)$ whose image under $\beta$ is a representative of $E'$.\end{paragr}

\begin{prop}\label{mamiko} Let $(\al,\bt,\gm)$ be a morphism from a Schreier extension $\xymatrix{E:A\;\ar@{>->}[r]^-\kappa&B\ar@{->>}[r]^-\sigma&M}$ to a Schreier extension $\xymatrix{E':A'\;\ar@{>->}[r]^-{\kappa'}&B'\ar@{->>}[r]^{\sigma'}&M'}$. Then:

{\rm $(a)$} If $\al$ and $\gm$ are injective homomorphisms, so is $\bt$.

{\rm $(b)$} If $\al$ and $\gm$ are surjective homomorphisms, so is $\bt$.

{\rm $(c)$} If $\al$ and $\gm$ are isomorphisms, so is $\bt$. \end{prop}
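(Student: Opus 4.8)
The plan is to prove the three parts in the order $(a)$, $(b)$, $(c)$, with $(c)$ being an immediate consequence of the first two. Throughout I will use the defining properties of a Schreier extension from Definition~\ref{mam} and the characterization of representatives in \ref{mami}: every element of $B$ lying over $x\in M$ is uniquely of the form $\kappa(a)+u_x$ for the chosen representative $u_x$ and some $a\in A$, and similarly for $B'$ with representatives $u'_{x'}$. Since $\bt$ preserves representatives, after possibly re-choosing representatives (using \ref{mamik}) I may assume $\bt(u_x)=u'_{\gm(x)}$ for each $x\in M$; this normalization is the technical backbone of all three arguments.

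For part $(a)$, suppose $\al$ and $\gm$ are injective and let $b_1,b_2\in B$ with $\bt(b_1)=\bt(b_2)$. Applying $\sigma'$ and using $\sigma'\bt=\gm\sigma$ gives $\gm\sigma(b_1)=\gm\sigma(b_2)$, so $\sigma(b_1)=\sigma(b_2)=:x$ by injectivity of $\gm$. Write $b_i=\kappa(a_i)+u_x$ with $a_i\in A$. Then $\bt(b_i)=\bt\kappa(a_i)+\bt(u_x)=\kappa'\al(a_i)+u'_{\gm(x)}$, so $\kappa'\al(a_1)+u'_{\gm(x)}=\kappa'\al(a_2)+u'_{\gm(x)}$; by the uniqueness clause of condition (2) for $E'$, $\al(a_1)=\al(a_2)$, hence $a_1=a_2$ by injectivity of $\al$, and therefore $b_1=b_2$. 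For part $(b)$, suppose $\al$ and $\gm$ are surjective and let $b'\in B'$; set $x':=\sigma'(b')$ and pick $x\in M$ with $\gm(x)=x'$. Write $b'=\kappa'(a')+u'_{x'}=\kappa'(a')+\bt(u_x)$ with $a'\in A'$, choose $a\in A$ with $\al(a)=a'$, and observe that $\bt(\kappa(a)+u_x)=\kappa'\al(a)+\bt(u_x)=\kappa'(a')+u'_{x'}=b'$. Finally, part $(c)$ follows by combining $(a)$ and $(b)$: if $\al,\gm$ are isomorphisms they are in particular injective and surjective, so $\bt$ is a bijective monoid homomorphism, hence an isomorphism.

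The main obstacle — really the only subtle point — is justifying the normalization $\bt(u_x)=u'_{\gm(x)}$. By \ref{mamik}, $\bt$ preserving representatives means that for each $x$ there is \emph{some} representative of $E$ over $x$ whose $\bt$-image is a representative of $E'$; that image is a representative over $\sigma'\bt(u_x)=\gm(x)$, and by \ref{mami} any two representatives of $E'$ over $\gm(x)$ differ by $\kappa'$ of an element of $U(A')$. Since re-choosing the representatives of $E$ does not affect the extension and the decomposition $b=\kappa(a)+u_x$ remains valid with the new $u_x$ (the set of representatives is exactly $\{\kappa(u)+u_x : u\in U(A)\}$), I am free to work with representatives satisfying $\bt(u_x)=u'_{\gm(x)}$ on the nose. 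Once this is in place, all three statements reduce to the uniqueness and existence clauses of condition (2) applied to $E'$, together with the elementary fact that $\gm$ controls which fiber $\sigma^{-1}(x)$ maps into which fiber $\sigma'^{-1}(x')$; no cohomological input or factor-set manipulation is needed.
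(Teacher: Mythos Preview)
Your argument follows essentially the same route as the paper's: decompose elements via the Schreier property, use that $\bt$ sends representatives to representatives, and invoke the uniqueness (for $(a)$) and existence (for $(b)$) clauses of condition (2) for $E'$. The logical skeleton of your $(a)$, $(b)$, $(c)$ matches the paper's proof line for line.

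The one point to flag is your normalization $\bt(u_x)=u'_{\gm(x)}$ ``for each $x\in M$''. This is unnecessary and, as stated, not generally achievable: if $\gm$ is not injective and $x_1\neq x_2$ satisfy $\gm(x_1)=\gm(x_2)$, then $\bt(u_{x_1})$ and $\bt(u_{x_2})$ are two (possibly distinct) representatives over the same element of $M'$, and no single choice of $u'_{\gm(x_1)}$ can equal both. Your justification in the third paragraph also speaks of re-choosing representatives of $E$, whereas what would actually be needed is a choice of representatives of $E'$; and even then one only gets the equality for one preimage of each $x'$, not all. The paper sidesteps this entirely by never introducing $u'_{\gm(x)}$: since $\bt(u_x)$ is itself a representative of $E'$ over $\gm(x)$, one applies the uniqueness/existence clause of condition (2) directly to $\bt(u_x)$, obtaining $\kappa'\al(a_1)+\bt(u_x)=\kappa'\al(a_2)+\bt(u_x)\Rightarrow \al(a_1)=\al(a_2)$ in $(a)$ and $b'=\kappa'(a')+\bt(u_x)$ in $(b)$. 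If you simply replace every occurrence of $u'_{\gm(x)}$ in your argument by $\bt(u_x)$, the normalization paragraph becomes superfluous and the proof is clean.
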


\begin{proof}
$(a)$ Suppose $\bt(b_1)=\bt(b_2).$ Then $\gm\sigma(b_1)=\sigma'\bt(b_1)=\sigma'\bt(b_2)=\gm\sigma(b_2)$, whence $\sigma(b_1)=\sigma(b_2).$
Consequently, $b_1=\kappa(a_1)+u_x$ and $b_2=\kappa(a_2)+u_x$, where $a_1, a_2 \in{A}, \;x=\sigma(b_1)$ and $u_x$ is a representative of the extension $E$. Next, $\kappa'\al(a_1)+\bt(u_x)=\bt\kappa(a_1)+\bt(u_x)=\bt(\kappa(a_1)+u_x)=\bt(b_1)=\bt(b_2)=\bt(\kappa(a_2)+u_x)=\bt\kappa(a_2)+\bt(u_x)=
\kappa'\al(a_2)+\bt(u_x).$ Since $\bt$ preserves representatives, it follows that $\al(a_1)=\al(a_2)$, whence $a_1=a_2.$ Thus $b_1=b_2$.

$(b)$ Let $b'\in{B'}.$  Take an element $x$ of $M$ with $\gm(x)=\sigma'(b')$ and a representative $u_x$ of the extension $E$. As $\sigma'(b')=\gm\sigma(u_x)=\sigma'\bt(u_x)$ and $\bt(u_x)$ is a representative of $E'$, there exists a (unique) element $a'$ of $A'$ with $b'=\kappa'(a')+\bt(u_x)$. Then, since $a'=\al(a)$ for some $a\in{A}$, one has $b'=\kappa'\al(a)+\bt(u_x)=\bt\kappa(a)+\bt(u_x)=\bt(\kappa(a)+u_x).$

Clearly $(a)$ and $(b)$ together yield $(c)$.
\;
\end{proof}

This proposition generalizes the Schreier split short five lemma of \cite{B} on the one hand, and the short five lemma for Schreier extensions of abelian monoids, proved in \cite{PhD}, on the other hand.
\vskip+1mm
 Two Schreier extensions $\xymatrix{E:A\;\ar@{>->}[r]&B\ar@{->>}[r]&M}$ and $\xymatrix{E':A\;\ar@{>->}[r]&B'\ar@{->>}[r]&M}$ are {\em congruent}, $E\equiv E'$, if there exists a monoid homomorphism $\bt:B\lr B'$ such that the diagram
 $$  \xymatrix{E:\hskip-13mm&A\;\ar@{=}[d]_{1_{{}_A}}\ar@{>->}[r]
&B\ar@{->>}[r]\ar[d]_\bt&M\ar@{=}[d]_{1_{{}_M}}\\
E':\hskip-12mm&A\;\ar@{>->}[r]&B'\ar@{->>}[r]&M} $$
commutes and $\bt$ carries representatives to representatives, i.e., $(1_A, \bt, 1_M)$ is a morphism from $E$ to $E'$.
 By Proposition \ref{mamiko}$(c)$, $\bt$ is an isomorphism. Consequently, congruence of extensions is a reflexive, symmetric and transitive relation.

\begin{defi}[\cite{P1}] \label{mamikos} Let $A$ be an $M$-semimodule and $\xymatrix{E: A \ar@{ >-{>}}[r]^-\kappa & B \ar@{ ->>}[r]^-\sigma & M}$  a sequence of monoids and monoid homomorphisms.  We say that $E$ is a {\em Schreier extension of the $M$-semimodule $A$ by the monoid $M$} if the following conditions hold:
\begin{enumerate}
\item\ $E$ is a Schreier extension of monoids.
\item\ $b+\kappa(a)=\kappa(\sigma(b)a)+b$ for all $a\in A$ and $b\in B.$
\end{enumerate}
\end{defi}

Two Schreier extensions of an $M$-semimodule $A$ by $M$ are congruent if they are congruent as Schreier extensions of monoids.
\vskip+1.5mm
For a given $M$-semimodule $A$, there is at least one Schreier extension of $A$ by $M$, the semidirect product extension
$$\xymatrix{0:A\;\ar@{>->}[r]^-\iota&A\rtimes M\ar@{->>}[r]^-\pi&M},\;\;\; \iota(a)=(a,1),\;\; \pi(a,x)=x$$
($(a,x)$ serves as a representative if and only if $a\in U(A)$).

Note that a Schreier extension $\xymatrix{E: A \ar@{ >-{>}}[r]^-\kappa & B \ar@{ ->>}[r]^-\sigma & M}$ is congruent to the semidirect product extension of $A$ by $M$ if and only if there is a monoid homomorphism  $\nu:M\lr B$ such that $\sigma\nu=1$ and $\nu(x)$ is a representative of $E$ for each $x\in M$ \cite[Proposition 5.4]{PhD}.

\begin{prop}[{\cite[Lemma 5.6]{PhD}}]\label{mamikoseb} Let $M$ be a monoid and $\xymatrix{E:A\;\ar@{>->}[r]^-\kappa&B\ar@{->>}[r]^-\sigma&M}$  a Schreier extension of an $M$-semimodule $A$ by $M$, and let $\al:A\lr A'$ be a homomorphism of $M$-semimodules. Then:

{\rm (i)} There exists a Schreier extension  $\xymatrix{\al E:A'\;\ar@{>->}[r]^-{\kappa'}&B'\ar@{->>}[r]^-{\sigma'}&M}$ and a morphism

$$  \xymatrix{E:\hskip-13mm&A\;\ar[d]_\al\ar@{>->}[r]^-{\kappa}&
B\ar@{->>}[r]^\sigma\ar[d]_\bt&M\ar@{=}[d]_{1_{{}_M}}\\
\al E:\hskip-12mm&A'\;\ar@{>->}[r]^-{\kappa'}&B'\ar@{->>}[r]^{\sigma'}&M}$$

of extensions.

{\rm (ii)} If $\xymatrix{E': A' \ar@{ >-{>}}[r]^-{\kappa''} & B'' \ar@{ ->>}[r]^-{\sigma''} & M}$ is another Schreier extension of $A'$ by $M$ with a morphism
$$  \xymatrix{E:\hskip-13mm&A\;\ar[d]_\al\ar@{>->}[r]^-{\kappa}&
B\ar@{->>}[r]^\sigma\ar[d]_{\bt'}&M\ar@{=}[d]_{1_{{}_M}}\\
 E':\hskip-12mm&A'\;\ar@{>->}[r]^-{\kappa''}&B''\ar@{->>}[r]^{\sigma''}&M}$$
of extensions, then $E'\equiv \al E.$
\end{prop}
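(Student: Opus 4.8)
The plan is to reduce the statement to the familiar language of factor sets and crossed products. First I would normalize the representatives of $E$: since $\Ker\sigma=\kappa(A)$ one has $\sigma^{-1}(1)=\kappa(A)$, the representative of $\sigma^{-1}(1)$ furnished by Definition \ref{mam} necessarily has the form $\kappa(c)$ with $c\in U(A)$, and hence, by \ref{mami}, $0\in B$ is itself a representative of $\sigma^{-1}(1)$; so fix representatives $u_x\in\sigma^{-1}(x)$, $x\in M$, with $u_1=0$. Since $\sigma(u_x+u_y)=xy=\sigma(u_{xy})$, there is a unique $f(x,y)\in A$ with $u_x+u_y=\kappa(f(x,y))+u_{xy}$. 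Condition $(2)$ of Definition \ref{mamikos} gives $u_x+\kappa(a)=\kappa(xa)+u_x$; feeding this, together with associativity in $B$ and the uniqueness of the $f(x,y)$, into the two bracketings of $u_x+u_y+u_z$ yields $f(x,1)=0=f(1,y)$ and the $2$-cocycle identity $xf(y,z)+f(x,yz)=f(xy,z)+f(x,y)$ of \ref{bip}. Moreover $\kappa(a)+u_x\mapsto(a,x)$ is a bijection $B\lr A\tm M$ transporting the addition of $B$ to $(a,x)+(a',y)=(a+xa'+f(x,y),xy)$ and $\kappa,\sigma$ to $a\mapsto(a,1)$ and $(a,x)\mapsto x$; in other words $B$ is the crossed product $A\tm_f M$.

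For (i), set $f'=\al f\colon M\tm M\lr A'$. Because $\al$ is a homomorphism of $M$-semimodules it is additive and commutes with the $M$-action, so $f'$ again satisfies $f'(x,1)=0=f'(1,y)$ and the $2$-cocycle identity. Hence the crossed product $B'=A'\tm_{f'}M$ (addition $(a',x)+(a'',y)=(a'+xa''+f'(x,y),xy)$) is a monoid with identity $(0,1)$, and $\al E\colon A'\rt B'\ta M$, $\kappa'(a')=(a',1)$, $\sigma'(a',x)=x$, is a Schreier extension of the $M$-semimodule $A'$ by $M$ with representatives $(0,x)$; the only point that uses condition $(2)$ of Definition \ref{mamikos} is that very condition for $B'$, and it reduces to commutativity of $A'$. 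Now define $\bt\colon B=A\tm_f M\lr B'=A'\tm_{f'}M$ by $\bt(a,x)=(\al(a),x)$. The equality $f'=\al f$ makes $\bt$ a monoid homomorphism, and by construction $\bt\kappa=\kappa'\al$, $\sigma'\bt=\sigma$, and $\bt$ sends the representative $u_x=(0,x)$ of $E$ to the representative $(0,x)$ of $\al E$. Thus $(\al,\bt,1_M)$ is a morphism of extensions, which proves (i).

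For (ii), let $(\al,\bt',1_M)$ be a morphism from $E$ to $E'$, where $E'$ has structure maps $\kappa''\colon A'\rt B''$ and $\sigma''\colon B''\ta M$. Since $\bt'$ preserves representatives, $v_x:=\bt'(u_x)$ is a representative of $E'$ for each $x$, and $v_1=\bt'(0)=0$. Applying $\bt'$ to $u_x+u_y=\kappa(f(x,y))+u_{xy}$ and using $\bt'\kappa=\kappa''\al$ gives $v_x+v_y=\kappa''(\al f(x,y))+v_{xy}$, so the factor set of $E'$ attached to $\{v_x\}$ is exactly $f'=\al f$. Consequently $\gm\colon B'=A'\tm_{f'}M\lr B''$, $\gm(a',x)=\kappa''(a')+v_x$, is a well-defined monoid homomorphism --- the twisted addition of $B'$ being carried to the addition of $B''$ precisely because $\{v_x\}$ realizes the factor set $f'$ and because of condition $(2)$ in $B''$ --- and it satisfies $\gm\kappa'=\kappa''$, $\sigma''\gm=\sigma'$, and $\gm(0,x)=v_x$, so it carries representatives of $\al E$ to representatives of $E'$. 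Hence $(1_{A'},\gm,1_M)$ is a morphism of Schreier extensions, and Proposition \ref{mamiko}$(c)$ forces $\gm$ to be an isomorphism; since congruence of extensions is symmetric, $E'\equiv\al E$.

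The one genuinely technical step is the first paragraph: unwinding the abstract definition of a Schreier extension of an $M$-semimodule into a normalized factor set and the crossed-product presentation of $B$, and noticing that condition $(2)$ of Definition \ref{mamikos} is exactly what makes the twisted addition associative over all of $M$ and what guarantees that $\al f$ is again an admissible factor set over $A'$. Once $B$, $B'$ and $B''$ are all presented as crossed products, the remaining verifications (that $\bt$ and $\gm$ are homomorphisms compatible with the extension data and send representatives to representatives) are mechanical, and the isomorphism asserted in (ii) is supplied by Proposition \ref{mamiko}$(c)$ rather than checked by hand.
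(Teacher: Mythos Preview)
Your proof is correct, but it takes a different route from the paper's. The paper constructs $B'$ intrinsically as a quotient of the semidirect product $A'\rtimes B$ (with $B$ acting on $A'$ through $\sigma$) by the congruence $(a'_1,b_1)\,\rho\,(a'_2,b_2)\Leftrightarrow u_{\sigma(b_1)}=u_{\sigma(b_2)}$ and $a'_1+\al(a_1)=a'_2+\al(a_2)$, where $b_i=\kappa(a_i)+u_{\sigma(b_i)}$; then $\kappa'(a')=\cl(a',0)$, $\sigma'(\cl(a',b))=\sigma(b)$, $\bt(b)=\cl(0,b)$, and for (ii) the comparison map is $\bt''(\cl(a',b))=\kappa''(a')+\bt'(b)$. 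You instead first identify $B$ with the crossed product $A\tm_f M$ via a normalized factor set, push $f$ forward to $f'=\al f$, and take $B'=A'\tm_{f'}M$; for (ii) you again use the crossed-product coordinates to write down $\gm$ explicitly. In effect you are anticipating the constructions $E_f$ and $E_f\equiv E$ that the paper develops only afterwards in \ref{mamikosebi}, \ref{pipka} and \ref{bezbu}. Your approach is more concrete and makes the later cohomological translation immediate; the paper's quotient construction is coordinate-free in $B$ (no identification of $B$ with a crossed product is needed, only a choice of representatives to define $\rho$) and yields a description of $B'$ that makes the universal property in (ii) a one-line check. Either way, the final appeal to Proposition~\ref{mamiko}$(c)$ is the same.
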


\begin{proof}
{\rm (i)} To each $x$ in $M$ choose a representative $u_x$ of the extension $E$. In particular, choose $u_1=0$. Regard $A'$  as a $B$-semiodule
($ba'=\sigma(b)a' $) and consider the semidirect product $A'\rtimes B$. Define a relation $\rho$ on the monoid $A'\rtimes B$ as follows: If
$(a'_1,b_1),(a'_2,b_2)\in A'\rtimes B $ and $b_1=\kappa(a_1)+u_{\sigma(b_1)},\; b_2=\kappa(a_2)+u_{\sigma(b_2)},\; a_1,a_2 \in A,$ we let
$((a'_1,b_1),(a'_2,b_2))\in \rho$ if and only if $u_{\sigma(b_1)}=u_{\sigma(b_2)}$ and $a'_1+\al(a_1)=a'_2+\al(a_2).$ One can easily see that
the relation $\rho$ is in fact a congruence on the monoid $A'\rtimes B .$ Denote by $B'$ the quotient monoid $(A'\rtimes B)/\rho$ and define
monoid homomorphisms $\kappa':A'\lr B'$, $\sigma':B'\lr M$ and $\bt:B\lr B'$ by $\kappa'(a')=cl(a',0), \; \sigma'(cl(a',b))=\sigma(b)$ and $\bt(b)=
cl(0,b)$, respectively. It is straightforward to check that $$\xymatrix{\al E: A' \ar@{ >-{>}}[r]^-{\kappa'} & B' \ar@{ ->>}[r]^-{\sigma'}&M}$$
is a Schreier extension of $A'$ by $M$ (for each $x\in M$, $cl(0,u_x)$ serves as a representative of $\al E$) and $(\al,\bt,1_M)$ a morphism
from $E$ to $\al E$.

{\rm (ii)} Define $\bt'':B'\lr B''$ by $\bt''(cl(a',b))=\kappa''(a')+\bt'(b).$ Consider $(a'_1,b_1),(a'_2,b_2)\in A'\rtimes B $ with $b_1=\kappa(a_1)+u_{\sigma(b_1)},\; b_2=\kappa(a_2)+u_{\sigma(b_2)}$ and let $((a'_1,b_1),(a'_2,b_2))\in \rho$, i.e., $u_{\sigma(b_1)}=u_{\sigma(b_2)}$ and $a'_1+\al(a_1)=a'_2+\al(a_2).$ Then
$\kappa''(a'_1)+\bt'(b_1)=\kappa''(a'_1)+\bt'(\kappa(a_1)+u_{\sigma(b_1)})=\kappa''(a'_1+\al(a_1))+\bt'(u_{\sigma(b_1)})=\kappa''(a'_2+\al(a_2))+
\bt'(u_{\sigma(b_2)})=\kappa''(a'_2)+\bt'(\kappa(a_2)+u_{\sigma(b_2)})=\kappa''(a'_2)+\bt'(b_2).$ Thus $\bt''$ is well defined. Clearly, $\bt''$ carries 0 into 0.  Besides, in view of \ref{mamikos}(2), we can write $\bt''(cl(a'_1,b_1))+\bt''(cl(a'_2,b_2))=\kappa''(a'_1)+\bt'(b_1)+\kappa''(a'_2)+\bt'(b_2)=\kappa''(a'_1)+
\kappa''(\sigma''(\bt'(b_1))a'_2)+\bt'(b_1)+\bt'(b_2)=\kappa''(a'_1)+\kappa''(\sigma(b_1)a'_2)+\bt'(b_1)+\bt'(b_2)=\kappa''(a'_1+\sigma(b_1)a'_2)+\bt'(b_1+b_2)=
\bt''(cl(a'_1+\sigma(b_1)a'_2,\;b_1+b_2))=\bt''(cl(a'_1,b_1)+cl(a'_2,b_2)).$ Hence $\bt''$ is a homomorphism of monoids. The commutativity of the diagram

$$  \xymatrix{\al E:\hskip-13mm&A'\ar@{=}[d]_{1_{{}_{A'}}}\;\ar@{>->}[r]^-{\kappa'}&
B'\ar@{->>}[r]^{\sigma'}\ar[d]_{\bt''}&M\ar@{=}[d]_{1_{{}_M}}\\
E':\hskip-12mm&A'\;\ar@{>->}[r]^-{\kappa''}&B''\ar@{->>}[r]^{\sigma''}&M} $$
is obvious. Furthermore, $\bt''$ carries representatives to representatives. Indeed, it suffices to note that $\bt''(cl(0,u_x))=\bt'(u_x)$ is a
representative of $E'$ for each $x\in M$ (see \ref{mamik}). Hence $\al E\equiv E'.$
\end{proof}

The construction we described in the previous proposition is similar to a construction given in \cite{NAM}, Theorem 4.1. On the one hand, the construction in \cite{NAM} is given for a more restricted class of extensions, since the authors consider Schreier extensions whose kernel is an abelian group. On the other hand, they proved a stronger universal property, with respect to any homomorphism of $M$-modules, and not only with respect to the identity between the kernels.

\vskip+1mm

Proposition \ref{mamikoseb} gives the following implication and congruences
\begin{align} E_1\equiv E_2\Rightarrow \al E_1\equiv \al E_2,\end{align}
\begin{align} \al'(\al E)\equiv (\al'\al) E,\;\;\; 1_AE\equiv E.\end{align}
Furthermore, a morphism of semidirect product extensions
$$ \xymatrix{0:\hskip-13mm&A\;\ar[d]_\al\ar@{>->}[r]^-{\iota}&
A\rtimes M\ar@{->>}[r]^\pi\ar[d]_{\al\rtimes M}&M\ar@{=}[d]_{1_{{}_M}}\\
0:\hskip-12mm&A'\;\ar@{>->}[r]^-{\iota'}&A'\rtimes M\ar@{->>}[r]^{\pi'}&M}$$
implies, by Proposition \ref{mamikoseb}, that
\begin{align}\al0\equiv0.\end{align}

Let $\mathscr{E}(M,A)$ denote the set of all congruence classes of Schreier extensions of an $M$-semimodule $A$ by $M$. We regard $\mathscr{E}(M,A)$ as a pointed set with the distinguished element $cl(0)$. Moreover, (4.8), (4.9) and (4.10) show that $\mathscr{E}(M,-)$ can be regarded as a functor from the category of $M$-semimodules to the category of pointed sets (for a homomorphism $\al:A\lr A'$ of $M$-semimodules, $\mathscr{E}(M,\al):\mathscr{E}(M,A)\lr \mathscr{E}(M,A')$ is defined by $\mathscr{E}(M,\al)(cl(E))=cl(\al E)).$
\vskip+1.5mm

\begin{paragr} \label{mamikosebi} Let $M$ be a monoid and $\xymatrix{E:A\;\ar@{>->}[r]^-\kappa&B\ar@{->>}[r]^-\sigma&M}$  a Schreier extension of an $M$-semimodule $A$ by $M$. To each $x$ in $M$ choose a representative $u_x$ of the extension $E$. In particular, choose $u_1=0$. Clearly, for each pair $x,y \in M$, there is a unique element $f(x,y) \in A$ such that
$$u_x+u_y=\kappa(f(x,y))+u_{xy}.$$
By \ref{mamikos}(2), one has
$$u_x+(u_y+u_z)=\kappa(xf(y,z)+f(x,yz))+u_{xyz}.$$
On the other hand,
$$(u_x+u_y)+u_z=\kappa(f(x,y)+f(xy,z))+u_{xyz}.$$
Hence,
$$xf(y,z)+f(x,yz)=f(xy,z)+f(x,y)$$
for all $x,y,z\in M$. Besides, since $u_1=0$,
$$f(x,1)=0=f(1,y)$$
for all $x,y\in M$. Thus, the function $f \colon M \times M \lr A$ defined above is a $2$-cocycle (see \ref{bip}). It is called a {\em factor set} of the extension $E$.
\end{paragr}
\begin{paragr}\label{mamikose} \textbf{Convention}. Suppose that $f$ and $f'$ are $2$-cocycles of a monoid $M$ with coefficients in an  $M$-semimodule $A$. We will say that $f$ and $f'$ are cohomologous (resp. strongly cohomologous) if $f \rho^2 f'$ (resp. $f \;\widetilde{\rho}\;^2 f'$) (see \ref{bip} and \ref{bipl}). It is clear that strongly cohomologous $2$-cocycles are cohomologous. If $A$ is an $M$-module, then $2$-cocycles are cohomologous if and only if they are strongly cohomologous.\end{paragr}
\begin{lem}\label{pip} Let $M$ be a monoid and $\xymatrix{E:A\;\ar@{>->}[r]^-\kappa&B\ar@{->>}[r]^-\sigma&M}$  a Schreier extension of an $M$-semimodule $A$ by $M$. Any two factor sets of $E$ are strongly cohomologous.\end{lem}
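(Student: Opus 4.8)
The plan is to compare the two factor sets through the change-of-representatives function that relates the underlying systems of representatives. Suppose $f$ and $f'$ are factor sets of $E$ arising (as in \ref{mamikosebi}) from representative systems $\{u_x\}_{x\in M}$ and $\{u'_x\}_{x\in M}$ with $u_1=u'_1=0$, so that $u_x+u_y=\kappa(f(x,y))+u_{xy}$ and $u'_x+u'_y=\kappa(f'(x,y))+u'_{xy}$ for all $x,y\in M$. First I would invoke \ref{mami}: since $u'_x$ and $u_x$ both lie in $\sigma^{-1}(x)$ and $u_x$ is a representative, there is a unique (by injectivity of $\kappa$) element $g(x)\in U(A)$ with $u'_x=\kappa(g(x))+u_x$; evaluating at $x=1$ forces $\kappa(g(1))=0$, hence $g(1)=0$, so $g\colon M\lr U(A)$ is a normalized function.

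Next I would expand $u'_x+u'_y$ in two ways. On the one hand, $u'_x+u'_y=\kappa(g(x))+u_x+\kappa(g(y))+u_y$; applying the Schreier identity \ref{mamikos}(2) in the form $u_x+\kappa(g(y))=\kappa(\sigma(u_x)g(y))+u_x=\kappa(xg(y))+u_x$ and then the definition of $f$, this becomes $\kappa(g(x)+xg(y)+f(x,y))+u_{xy}$. On the other hand, $u'_x+u'_y=\kappa(f'(x,y))+u'_{xy}=\kappa(f'(x,y))+\kappa(g(xy))+u_{xy}=\kappa(f'(x,y)+g(xy))+u_{xy}$. Since the coefficient of $u_{xy}$ in an expression of the form $\kappa(a)+u_{xy}$ is uniquely determined and $\kappa$ is injective, this yields
$$g(x)+xg(y)+f(x,y)=f'(x,y)+g(xy)\qquad\text{in}\ A$$
for all $x,y\in M$.

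Finally, because $g$ takes its values in the abelian group $U(A)$ (and $xg(y)\in U(A)$ as well, the action being additive), the last identity may be rewritten as $f(x,y)=f'(x,y)+x(-g)(y)-(-g)(xy)+(-g)(x)$. Setting $h=-g\colon M\lr U(A)$, which still satisfies $h(1)=0$, this is precisely the condition in \ref{bipl} for $f\;\widetilde{\rho}\;^2\;f'$, i.e.\ $f$ and $f'$ are strongly cohomologous. The only delicate point is the passage to subtraction in the last step, which is legitimate exactly because the correcting function $g$ is forced by \ref{mami} to land in $U(A)$; beyond keeping track of this and of the sign ($h=-g$, not $g$), the argument is routine, so I do not expect a genuine obstacle.
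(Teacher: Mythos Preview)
Your proof is correct and follows essentially the same approach as the paper's: the only difference is that you write $u'_x=\kappa(g(x))+u_x$ whereas the paper writes $u_x=\kappa(g(x))+u'_x$, so your $g$ is the negative of the paper's, which is exactly why you need the substitution $h=-g$ at the end while the paper arrives directly at $f(x,y)=f'(x,y)+xg(y)-g(xy)+g(x)$.
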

\begin{proof} Suppose that $u_x$ and $u'_x$,\; $x\in M$, are two choices of representatives of $E$ with $u_1=u'_1=0$. Denote the corresponding factor sets by $f$ and $f'$, respectively. It follows from \ref{mami} that there is a function $g \colon M \lr U(A)$ such that $u_x=\kappa(g(x))+u'_x$ for all $x\in M$. Clearly, $g(1)=0$. By \ref{mamikos}(2), we have
$$u_x+u_y=\kappa(g(x))+u'_x+\kappa(g(y))+u'_y=\kappa(g(x)+xg(y)+f'(x,y))+u'_{xy}.$$
On the other hand,
$$u_x+u_y=\kappa(f(x,y)+g(xy))+u'_{xy}.$$
Hence,
$$f(x,y)=f'(x,y)+xg(y)-g(xy)+g(x)$$
for all $x,y\in M$. Thus $f$ and $f'$ are strongly cohomologous. \end{proof}
\begin{paragr} \label{pipk} Since a congruence of Schreier extensions maps representatives to representatives, it is obvious that congruent  Schreier extensions have the same factor sets.\end{paragr}
\begin{paragr} \label{pipka} Suppose that $f \colon M \times M \lr A$ is a $2$-cocycle, that is,
$$xf(y,z)+f(x,yz)=f(xy,z)+f(x,y) \quad\text{and}\quad f(x,1)=0=f(1,y)$$
for all $x,y,z\in M$. Then the set $B_f=A\times M$ with composition defined by
$$(a_1,x)+(a_2,y)=(a_1+xa_2+f(x,y),xy)$$
is an additive monoid with the idendity element $(0,1)$. Furthermore, it is easy to see that
$$\xymatrix{E_f:A\;\ar@{>->}[r]^-{\kappa_f}&B_f\ar@{->>}[r]^-{\sigma_f}&M}, \;\;\; \kappa_f(a)=(a,1), \;\;\; \sigma_f(a,x)=x,$$
is a Schreier extension of the $M$-semimodule $A$ by the monoid $M$ with the set of representatives $\{(a,x)\;|\;a \in U(A),\;x\in M\}$. In addition, since $(0,x)+(0,y)=(f(x,y),1)+(0,xy)$, the $2$-cocycle $f$ is one of the factor sets of $E_f$. Obviously, $E_0\equiv0$.
\end{paragr}
\begin{lem}\label{bez} Let $f,f' \colon M \times M \lr A$ be $2$-cocycles. If $f$ and $f'$ are strongly cohomologous, then $E_f$ and $E_{f'}$ are congruent.
\end{lem}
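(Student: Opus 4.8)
Since $f$ and $f'$ are strongly cohomologous, there is, by the discussion in \ref{bipl}, a function $g\colon M\lr U(A)$ with $g(1)=0$ such that
$$f(x,y)=f'(x,y)+xg(y)-g(xy)+g(x)$$
for all $x,y\in M$. The plan is to write down an explicit monoid homomorphism $\bt\colon B_f\lr B_{f'}$ and to verify that $(1_A,\bt,1_M)$ is a morphism of extensions, i.e.\ that it makes the congruence diagram commute and carries representatives to representatives; then $E_f\equiv E_{f'}$ by definition. The natural candidate, guided by \ref{pipka}, is
$$\bt(a,x)=(a+g(x),\,x),\qquad a\in A,\;x\in M.$$
Note $g(x)\in U(A)$, so $a+g(x)$ makes sense and the formula is well defined.

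First I would check that $\bt$ is a homomorphism of monoids. Since $g(1)=0$, $\bt(0,1)=(0,1)$, so $\bt$ preserves the identity. For the additivity, one computes $\bt((a_1,x)+(a_2,y))=\bt(a_1+xa_2+f(x,y),\,xy)=(a_1+xa_2+f(x,y)+g(xy),\,xy)$, while $\bt(a_1,x)+\bt(a_2,y)=(a_1+g(x),x)+(a_2+g(y),y)=(a_1+g(x)+x(a_2+g(y))+f'(x,y),\,xy)=(a_1+xa_2+g(x)+xg(y)+f'(x,y),\,xy)$; these agree precisely because $f(x,y)+g(xy)=f'(x,y)+xg(y)+g(x)$, which is the cohomology relation rearranged (using that $U(A)$ is a group so $-g(xy)$ cancels correctly). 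Second, the commutativity of
$$\xymatrix{A\;\ar@{=}[d]\ar@{>->}[r]^-{\kappa_f}&B_f\ar@{->>}[r]^-{\sigma_f}\ar[d]_\bt&M\ar@{=}[d]\\ A\;\ar@{>->}[r]^-{\kappa_{f'}}&B_{f'}\ar@{->>}[r]^-{\sigma_{f'}}&M}$$
is immediate: $\bt\kappa_f(a)=\bt(a,1)=(a+g(1),1)=(a,1)=\kappa_{f'}(a)$ and $\sigma_{f'}\bt(a,x)=\sigma_{f'}(a+g(x),x)=x=\sigma_f(a,x)$.

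Finally I would check that $\bt$ preserves representatives. By \ref{pipka} the representatives of $E_f$ are exactly the pairs $(a,x)$ with $a\in U(A)$, and likewise for $E_{f'}$. If $a\in U(A)$, then $a+g(x)\in U(A)$ as well, since $U(A)$ is closed under addition; hence $\bt(a,x)=(a+g(x),x)$ is a representative of $E_{f'}$. Therefore $(1_A,\bt,1_M)$ is a morphism of Schreier extensions of $A$ by $M$, and so $E_f\equiv E_{f'}$.

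The verifications are all routine; the only place that requires a little care is the homomorphism property, where one must use that $g$ takes values in the group $U(A)$ so that the difference $xg(y)-g(xy)+g(x)$ appearing in the cohomology relation can be manipulated additively inside $A$. I expect no genuine obstacle here — the content of the lemma is essentially that the explicit formula $\bt(a,x)=(a+g(x),x)$ works, and unwinding the monoid operations in $B_f$ and $B_{f'}$ makes this transparent.
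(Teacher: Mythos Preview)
Your proposal is correct and follows essentially the same approach as the paper: the same map $\bt(a,x)=(a+g(x),x)$ is defined, and the same verifications are carried out. The only minor difference is that you check directly that $\bt$ carries every representative $(a,x)$ with $a\in U(A)$ to a representative, whereas the paper invokes \ref{mamik} to reduce to checking the single representative $(0,x)$ in each fibre; both are fine.
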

\begin{proof} By the hypothesis, there exists a function $g \colon M \lr U(A)$ with $g(1)=0$ such that
$$f(x,y)=f'(x,y)+xg(y)-g(xy)+g(x)$$
for all $x,y\in M$. Consider a diagram
$$  \xymatrix{E_f:\hskip-13mm&A\;\ar@{=}[d]_{1_{{}_A}}\ar@{>->}[r]^-{\kappa_f}&
B_ f\ar@{->>}[r]^{\sigma_f}\ar[d]_\bt&M\ar@{=}[d]_{1_{{}_M}}\\
E_{f'}:\hskip-12mm&A\;\ar@{>->}[r]^-{\kappa_{f'}}&B_{f'}\ar@{->>}[r]^{\sigma_{f'}}&M}$$
in which $\beta$ is defined by $\beta(a,x)=(a+g(x),x)$. One can easily check that $\beta$ is a monoid homomorphism and the diagram commutes. Furthermore, $\beta$ preserves representatives. Indeed, by \ref{mami} and \ref{mamik}, we only need to note that $\beta(0,x)=(g(x),x)=(g(x),1)+(0,x)$ and $g(x)\in U(A)$. Thus, $(1_A, \beta, 1_M)$ is a morphism of extensions. Hence $E_f \equiv E_{f'}$.
\end{proof}
\begin{paragr}\label{bezb} The converse of Lemma \ref{bez} also holds. Indeed, if $E_f \equiv E_{f'}$, then $f$ and $f'$ are both factor sets of $E_f$ (see \ref{pipka} and \ref{pipk}). Hence, by Lemma \ref{pip}, they are strongly cohomologous.
\end{paragr}
\begin{paragr}\label{bezbu} Suppose that $M$ is a monoid and $\xymatrix{E:A\;\ar@{>->}[r]^-\kappa&B\ar@{->>}[r]^-\sigma&M}$  a Schreier extension of an $M$-semimodule $A$ by $M$. Suppose further that $u_x,\;x\in M$, is a set of representatives of $E$ with $u_1=0$  and  $f \colon M \times M \lr A$ the corresponding factor set. Then
$$  \xymatrix{E_f:\hskip-13mm&A\;\ar@{=}[d]_{1_{{}_A}}\ar@{>->}[r]^-{\kappa_f}&
B_ f\ar@{->>}[r]^{\sigma_f}\ar[d]_\bt&M\ar@{=}[d]_{1_{{}_M}}\\
E:\hskip-12mm&A\;\ar@{>->}[r]^-{\kappa}&B\ar@{->>}[r]^{\sigma}&M,}$$
where $\beta$ is defined by $\beta(a,x)=\kappa(a)+u_x$, is a morphism of extensions. Hence $E_f$ is congruent to $E$.
\end{paragr}
\begin{theo}[\cite{P1,PhD}]\label{bix} Let $M$ be a monoid. For any $M$-semimodule $A$, the map
$$ \zeta(M,A):\mathscr{H}^2(M,A)\lr \mathscr{E}(M,A),\;\;\;\;\;\;\;\zeta(M,A)(cl(f))=cl(E_f),$$
is a bijection of pointed sets.  \end{theo}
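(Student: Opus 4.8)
The plan is to read the bijection off from the lemmas already assembled, viewing $\zeta(M,A)$ and its prospective inverse ``pass to a factor set'' as two halves of the same correspondence. First I would verify that $\zeta(M,A)$ is a well-defined morphism of pointed sets: if $cl(f)=cl(f')$ in $\mathscr{H}^2(M,A)$, then $f$ and $f'$ are strongly cohomologous, so $E_f\equiv E_{f'}$ by Lemma \ref{bez}, whence $cl(E_f)=cl(E_{f'})$; and since $E_0\equiv 0$ by \ref{pipka}, the base point $cl(0)$ is sent to the base point $cl(0)$.

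For injectivity, suppose $\zeta(M,A)(cl(f))=\zeta(M,A)(cl(f'))$, i.e.\ $E_f\equiv E_{f'}$. By \ref{pipka} both $f$ and $f'$ are factor sets of $E_f$, so Lemma \ref{pip} — as recorded in \ref{bezb} — forces $f$ and $f'$ to be strongly cohomologous, i.e.\ $cl(f)=cl(f')$ in $\mathscr{H}^2(M,A)$. For surjectivity, take an arbitrary Schreier extension $E$ of the $M$-semimodule $A$ by $M$, choose representatives $u_x$ with $u_1=0$, and let $f$ be the associated factor set, which is a $2$-cocycle by \ref{mamikosebi}; then $\beta(a,x)=\kappa(a)+u_x$ is a morphism $E_f\lr E$ of extensions (see \ref{bezbu}), so $E\equiv E_f=\zeta(M,A)(cl(f))$.

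Hence $\zeta(M,A)$ is a bijection of pointed sets, with inverse $cl(E)\mapsto cl(f)$ for $f$ any factor set of $E$ — a rule that is well defined on congruence classes by \ref{pipk} and at the level of $\mathscr{H}^2$ by Lemma \ref{pip}. In effect, the present theorem is merely the packaging of the earlier results; were one to prove it from scratch, the genuine obstacles would be exactly those earlier steps — that the factor-set class is independent of the chosen representatives (Lemma \ref{pip}) and that congruence of $E_f$ and $E_{f'}$ implies strong cohomology of $f$ and $f'$ (\ref{bezb}) — whereas at this point nothing beyond this bookkeeping remains.
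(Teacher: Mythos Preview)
Your proof is correct and follows essentially the same approach as the paper: both use Lemma~\ref{bez} for well-definedness, $E_0\equiv 0$ for the base point, \ref{bezbu} for surjectivity, and \ref{pipka}/\ref{pipk}/Lemma~\ref{pip} (packaged in \ref{bezb}) for injectivity. The only cosmetic difference is that the paper explicitly names the inverse map $\eta(M,A)(cl(E))=cl(f_E)$ and checks $\zeta\eta=1$, $\eta\zeta=1$, whereas you argue injectivity and surjectivity directly before identifying the inverse; these are the same argument.
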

\begin{proof} By Lemma \ref{bez}, $\zeta(M,A)$ is well-defined.  Since $E_0\equiv0$, we have $\zeta(M,A)(cl(0))=cl(E_0)=cl(0)$. The map
$$ \eta(M,A):\mathscr{E}(M,A)\lr \mathscr{H}^2(M,A),\;\;\;\;\;\;\;\eta(M,A)(cl(E))=cl(f_E),$$
where $f_E$ denotes one of the factor sets of $E$ (see \ref{mamikosebi}), is also well-defined by Lemma \ref{pip} and \ref{pipk}. It follows from \ref{bezbu} that $\zeta\eta=1$. Besides, $\eta\zeta=1$ (see \ref{pipka}).\end{proof}
If $M$ is a group and $A$ an $M$-module, then $\zeta(M,A)$ turns into the well-known bijection between the second cohomology group of M with coefficients in $A$ and the set of all congruence classes of extensions of $A$ by $M$. In the case where $M$ is a monoid and $A$ an $M$-module, this theorem is obtained in \cite{T}.
\begin{remk} Any Schreier extension of monoids $\xymatrix{E: A \ar@{ >-{>}}[r]^-\kappa & B \ar@{ ->>}[r]^-\sigma & M}$ with $A$ a cancellative abelian monoid induces an action $\varphi:M\lr \text{End}(A)$ of $M$ on $A$ such that $b+\kappa(a)=\kappa(\varphi(\sigma(b))(a))+b$ \;for all $a\in A$ and $b\in B$\;(cf.\;\ref{mamikos}). Indeed, whenever $\kappa(a_1)+b=\kappa(a_2)+b,\; a_1, a_2 \in A,\; b\in B,$ one has $a_1=a_2.\;(\kappa(a_1)+b=\kappa(a_2)+b \Rightarrow \kappa(a_1+a)+u_{\sigma(b)}=
\kappa(a_2+a)+u_{\sigma(b)} \Rightarrow a_1+a=a_2+a \Rightarrow a_1=a_2.)$ Therefore, for each pair of elements $a\in A$ and $b\in B,$ there exists a unique element
$a'\in A$ such that $b+\kappa(a)=\kappa(a')+b.\;(b+\kappa(a)=\kappa(a_1)+u_{\sigma(b)}+\kappa(a)=\kappa(a_1)+\kappa(a')+u_{\sigma(b)}=
\kappa(a')+\kappa(a_1)+u_{\sigma(b)}=\kappa(a')+b.)$ Consequently, each $b\in B$ determines a map $\theta_b:A\lr A$ with $b+\kappa(a)=\kappa(\theta_b(a))+b,
\; a\in A,$ which is in fact an endomorphism of $A$. Moreover, the map $\theta:B\lr \text{End}(A)$ defined by $\theta(b)=\theta_b$ is a monoid homomorphism satisfying
$\theta\kappa=1_A.$ It hence determines the desired homomorphism $\varphi:M\lr \text{End}(A)$ with $\theta=\varphi\sigma$ (see remark after Definition \ref{mam}).
Thus, Theorem \ref{bix}, in particular, classifies Schreier extensions of a cancellative abelian monoid $A$ by a monoid $M$ which induce a fixed action
$\varphi:M\lr \text{End}(A)$ of $M$ on $A$.\end{remk}

Now, for an $M$-semimodule $A$, we examine the relationship between $H^2(M, A)$ and Schreier extensions of $A$ by $M$.

Let $M$ be a monoid and let $\xymatrix{E_1:A\;\ar@{>->}[r]^-{\kappa_1}&B_1\ar@{->>}[r]^-{\sigma_1}&M}$ and $\xymatrix{E_2:A\;\ar@{>->}[r]^-{\kappa_2}&B_2\ar@{->>}[r]^-{\sigma_2}&M}$ be  Schreier extensions of an $M$-semimodule $A$ by $M$. We say that $E_1$ and $E_2$ are {\em similar}, $E_1\sim E_2$, if there is a Schreier extension $\xymatrix{S: K(A)\ar@{ >-{>}}[r]^-\kappa & B \ar@{ ->>}[r]^-\sigma & M}$ of the $M$-module $K(A)$ by $M$ and monoid homomorphisms $\bt_1:B_1\lr B$ and $\bt_2:B_2\lr B$ such that the diagram

$$\xymatrix{\hskip-7mm E_1:A\; \ar@{>->}[r]^-{\kappa_1}\ar[d]_{k_A} & B_1\ar@{->>}[r]^-{\sigma_1}\ar[d]_{\bt_1} &M  \ar@{=}[d]_{1_{{}_M}} \\ \hskip-2mm S: K(A)\; \ar@{>->}[r]^-\kappa & B \ar@{->>}[r]^-\sigma & M \\ \hskip-7mmE_2: A \; \ar@{>->}[r]^-{\kappa_2} \ar[u]^{k_A} & B_2 \ar@{->>}[r]^-{\sigma_2} \ar[u]^{\bt_2} & M \ar@{=}[u]^{1_{{}_M}}}$$ is commutative. In other words, $E_1\sim E_2$ if there is a Schreier extension $\xymatrix{S: K(A)\ar@{ >-{>}}[r]^-\kappa & B \ar@{ ->>}[r]^-\sigma & M}$  and morphisms of extensions $(k_A, \bt_1, 1_M): E_1\lr S$ and $(k_A, \bt_2, 1_M): E_2\lr S$ (see \ref{mami} and \ref{mamik}).

Note that if $A$ is a cancellative $M$-semimodule, then  $\bt_1$ and $\bt_2$ in the above commutative diagram are in fact injective homomorphisms
by Proposition \ref{mamiko}$(a)$.
\begin{prop}\label{goyo} $E_1\sim E_2$ if and only if $k_AE_1\equiv k_AE_2.$  \end{prop}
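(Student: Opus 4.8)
\emph{Proof plan.} The plan is to derive both implications formally from Proposition \ref{mamikoseb}, exploiting two facts: that $k_AE_1$ and $k_AE_2$ are themselves Schreier extensions of the $M$-module $K(A)$ by $M$ (by Proposition \ref{mamikoseb}(i)), and that a composite of two morphisms of Schreier extensions, each lying over $1_M$, is again such a morphism (the composite diagram obviously commutes, and carrying representatives to representatives is preserved under composition).

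For the direction $E_1\sim E_2\Rightarrow k_AE_1\equiv k_AE_2$, I would start from a witnessing Schreier extension $S$ of the $M$-module $K(A)$ by $M$ together with morphisms $(k_A,\bt_1,1_M)\colon E_1\lr S$ and $(k_A,\bt_2,1_M)\colon E_2\lr S$, and then apply Proposition \ref{mamikoseb}(ii) with $E=E_i$, $\al=k_A$, $A'=K(A)$, $E'=S$ for $i=1,2$. Since $S$ is a Schreier extension of $K(A)$ by $M$ and there is a morphism $E_i\lr S$ over $k_A$ and $1_M$, that proposition yields $S\equiv k_AE_i$. Transitivity of congruence then gives $k_AE_1\equiv S\equiv k_AE_2$.

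For the converse, I would simply take $S:=k_AE_1$, which is a Schreier extension of the $M$-module $K(A)$ by $M$ by Proposition \ref{mamikoseb}(i) and which already comes equipped with a morphism $(k_A,\bt_1,1_M)\colon E_1\lr k_AE_1=S$. For $E_2$, Proposition \ref{mamikoseb}(i) provides a morphism $(k_A,\bt_2,1_M)\colon E_2\lr k_AE_2$; the assumed congruence $k_AE_1\equiv k_AE_2$, which by symmetry also gives a morphism $(1_{K(A)},\gm,1_M)\colon k_AE_2\lr k_AE_1=S$ (with $\gm$ an isomorphism on the middle terms by Proposition \ref{mamiko}(c)), can be composed with the previous morphism to produce a morphism $(k_A,\gm\bt_2,1_M)\colon E_2\lr S$. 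Then $S$ together with $(k_A,\bt_1,1_M)\colon E_1\lr S$ and $(k_A,\gm\bt_2,1_M)\colon E_2\lr S$ witnesses $E_1\sim E_2$.

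I do not anticipate a genuine obstacle: the statement is essentially a formal consequence of the universal-type property packaged in Proposition \ref{mamikoseb}. The only point that will need an explicit line of checking is that the composite $\gm\bt_2$ is still representative-preserving, which is immediate from the corresponding properties of $\bt_2$ and $\gm$.
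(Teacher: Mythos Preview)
Your proposal is correct and follows essentially the same approach as the paper's own proof: both directions are derived from Proposition~\ref{mamikoseb}, with the forward implication obtained by applying part~(ii) to each $E_i$ to get $S\equiv k_AE_i$, and the converse by composing the canonical morphism $E_i\to k_AE_i$ with the congruence between $k_AE_1$ and $k_AE_2$. The only cosmetic difference is that the paper takes $S=k_AE_2$ and composes on the $E_1$ side, whereas you take $S=k_AE_1$ and compose on the $E_2$ side; this is a symmetric choice with no substantive difference.
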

\begin{proof} Suppose that $E_1\sim E_2.$ Then, by Proposition \ref{mamikoseb}, $S\equiv k_AE_1$ and $S\equiv k_AE_2.$ Hence $k_AE_1\equiv k_AE_2.$ Conversely, assume that $k_AE_1\equiv k_AE_2.$   That is, we are given a morphism $(1_{K(A)}, \bt, 1_M): k_AE_1\lr k_AE_2$ of extensions. On the other hand, again  by Proposition \ref{mamikoseb}, there are morphisms $(k_A, \bt_1, 1_M): E_1\lr k_AE_1$ and $(k_A, \bt_2, 1_M): E_2\lr k_AE_2.$ The latter and the composite morphism $(k_A, \bt\bt_1, 1_M): E_1\lr k_AE_2$ show that $E_1$ is similar to $E_2$.
\end{proof}
It immediately follows from this proposition that
\begin{align} E\sim E,\;\;\;\;\; E_1\sim E_2\Rightarrow E_2\sim E_1,\;\;\;\;\; E_1\sim E_2 \sim E_3 \Rightarrow E_1 \sim E_3, \end{align}
\begin{align} E_1 \equiv E_2 \Rightarrow E_1\sim E_2 \end{align}
(see (4.8)). Furthermore, (4.23) together with (4.9) and (4.10) implies the following similarities
\begin{align} \al'(\al E)\sim(\al'\al) E,\;\;\;\;\; 1_AE\sim E, \;\;\;\;\; \al0\sim0. \end{align}
Also, for any homomorphism of $M$-semimodules $\al:A\lr A'$, we have
\begin{align} E_1 \sim E_2 \Rightarrow \al E_1\sim \al E_2. \end{align}
Indeed, in view of Proposition \ref{goyo} and (4.8), there is a chain of implications
$$E_1\sim E_2\Rightarrow k_AE_1\equiv k_AE_2\Rightarrow K(\al)k_AE_1\equiv K(\al)k_AE_2\Rightarrow k_{A'}\al E_1\equiv k_{A'}\al E_2 \Rightarrow  \al E_1\sim \al E_2.$$
\vskip+1.5mm
Let $E(M,A)$ denote the set of all similarity classes of Schreier extensions of an $M$-semimodule $A$ by $M$ (see (4.22)) regarded as a pointed set with the distinguished element $cl(0)$.  By (4.24) and (4.25), $E(M,-)$ is a functor from the category of $M$-semimodules to the category of pointed sets (if $\al:A\lr A'$ is a homomorphism of $M$-semimodules, then $E(M,\al):E(M,A)\lr E(M,A')$  is defined by $E(M,\al)(cl(E))=cl(\al E)).$
\vskip+1mm
If $A$ is an $M$-module, then $E_1\sim E_2$ if and only if $E_1 \equiv E_2$, and hence $E(M,A)$ coincides with $\mathscr{E}(M,A)$.
\vskip+1.5mm
For any  $M$-semimodule $A$, Proposition \ref{goyo}, together with (4.8) and (4.23), yields a commutative diagram of pointed sets
\begin{align}\xymatrix{\mathscr{E}(M,A) \ar[rr]^{k_A^{*}} \ar[dr]_{j} & & E(M,K(A))  \\ &  \ar[ru]_{\;\;E(M,k_A)} E(M,A),}\end{align}
where $E(M,k_A)$, $j$ and $k_A^*$ are defined by
$$ E(M,k_A)(cl(E))=cl(k_AE),\;\;\;\;j(cl(E))=cl(E),\;\;\;\;k_A^*(cl(E))=cl(k_AE).$$  Clearly, $E(M,k_A)$ is an injection (by \ref{goyo}) and $j$ a surjection.

Before we continue, we note the following. Let $A$ be an $M$-semimodule, $f \colon M \times M \lr A$ a 2-cocycle and $\al:A\lr A'$ a homomorphism of $M$-semimodules. Then
\begin{align} \al E_f\equiv E_{\al f}, \end{align}
where $E_f$ and $E_{\al f}$ are the Schreier extensions corresponding to the 2-cocycles $f \colon M \times M \lr A$ and $\al f \colon M \times M \lr A'$ (see \ref{pipka}). Indeed, there is a morphism of extensions
$$  \xymatrix{E_f:\hskip-13mm&A\;\ar[d]_{\al}\ar@{>->}[r]^-{\kappa_f}&
B_ f\ar@{->>}[r]^{\sigma_f}\ar[d]_\bt&M\ar@{=}[d]_{1_{{}_M}}\\
E_{\al f}:\;\;\;\hskip-12mm&A'\;\ar@{>->}[r]^-{\kappa_{\al f}}&B_{\al f}\ar@{->>}[r]^{\sigma_{\al f}}&M,&\beta(a,x)=(\al(a),x),}$$
which implies the desired congruence by Proposition \ref{mamikoseb}. In particular, one has
\begin{align} k_A E_f\equiv E_{k_Af}. \end{align}
\begin{theo}\label{bixo} Let $M$ be a monoid. For any $M$-semimodule $A$ there is a surjective map of pointed sets
$$ \theta(M,A):H^2(M,A)\lr E(M,A),\;\;\;\;\;\;\;\theta(M,A)(cl(f))=cl(E_f).$$
Furthermore, if $A$ is a cancellative $M$-semimodule, then $\theta(M,A$) is a bijection. \end{theo}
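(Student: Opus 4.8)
The plan is to build $\theta(M,A)$ by first showing it is well-defined on cohomology classes, then surjectivity, and finally injectivity in the cancellative case.

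\textbf{Well-definedness.} First I would show that if $f$ and $f'$ are cohomologous $2$-cocycles (i.e. $f\,\rho^2\,f'$ in the sense of \ref{bip}), then $E_f\sim E_{f'}$. By \ref{bipli}, over the $M$-module $K(A)$ the relations $\rho^2$ and $\widetilde\rho\,^2$ coincide; and applying $k_A$ to the relation $f\,\rho^2\,f'$ and using that $k_A$ is a homomorphism, the functions $k_Af$ and $k_Af'$ become cohomologous $2$-cocycles valued in $K(A)$, hence strongly cohomologous there. By Lemma \ref{bez}, $E_{k_Af}\equiv E_{k_Af'}$, and by (4.28), $k_AE_f\equiv E_{k_Af}\equiv E_{k_Af'}\equiv k_AE_{f'}$. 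Proposition \ref{goyo} then gives $E_f\sim E_{f'}$, so $\theta(M,A)(cl(f))=cl(E_f)$ depends only on the class of $f$ in $H^2(M,A)=Z^2/\rho^2$. That $cl(0)\mapsto cl(0)$ is immediate since $E_0\equiv 0$ (see \ref{pipka}).

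\textbf{Surjectivity.} Given any Schreier extension $E$ of $A$ by $M$, choose a set of representatives with $u_1=0$ and let $f$ be the resulting factor set, which is a $2$-cocycle by \ref{mamikosebi}. By \ref{bezbu}, $E_f\equiv E$, hence $E_f\sim E$, so $cl(E)=\theta(M,A)(cl(f))$ lies in the image. Thus $\theta(M,A)$ is onto $E(M,A)$.

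\textbf{Injectivity when $A$ is cancellative.} Suppose $E_f\sim E_{f'}$; I must show $cl(f)=cl(f')$ in $H^2(M,A)$. By Proposition \ref{goyo} and (4.28), $E_{k_Af}\equiv k_AE_f\equiv k_AE_{f'}\equiv E_{k_Af'}$, so by \ref{bezb} (the converse of Lemma \ref{bez}) the $2$-cocycles $k_Af$ and $k_Af'$ valued in $K(A)$ are strongly cohomologous: there is $g\colon M\to U(K(A))=K(A)$ with $g(1)=0$ and $k_Af(x,y)=k_Af'(x,y)+xg(y)-g(xy)+g(x)$. Since $A$ is cancellative, $k_A$ is the inclusion $A\hookrightarrow K(A)$ and each element of $K(A)$ is a difference of two elements of $A$; write $g(x)=g_1(x)-g_2(x)$ with $g_i\colon M\to A$, $g_i(1)=0$ (adjusting so the normalization holds). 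Substituting and clearing denominators inside $K(A)$ gives, for all $x,y$,
$$f(x,y)+xg_2(y)+g_2(x)+g_1(xy)=f'(x,y)+xg_1(y)+g_1(x)+g_2(xy),$$
an identity among elements of $A$ (it holds in $K(A)$, and both sides lie in the image of the injection $k_A$). This is exactly the relation $f\,\rho^2\,f'$ of \ref{bip}. Hence $cl(f)=cl(f')$ in $H^2(M,A)$, and $\theta(M,A)$ is injective, therefore bijective.

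\textbf{Main obstacle.} The delicate point is the injectivity step: passing from the strong-cohomology relation that lives in $K(A)$ (where subtraction is available) back to the $\rho^2$-relation inside $A$. One must split the coboundary function $g\colon M\to K(A)$ into a difference $g_1-g_2$ of $A$-valued normalized functions and then verify that the resulting additive identity genuinely holds in $A$ — which relies precisely on cancellativity, via the injectivity of $k_A$ (\ref{biplik}) and the fact that every element of $K(A)$ is a difference of elements of $A$. Everything else is routine bookkeeping with the congruences (4.8)–(4.10) and the congruences $E_f\equiv E$, $\alpha E_f\equiv E_{\alpha f}$ already established.
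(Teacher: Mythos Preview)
Your proof is correct and follows essentially the same route as the paper: well-definedness via $k_Af\sim k_Af'$ in $K(A)$ combined with (4.28) and Proposition~\ref{goyo}; surjectivity via \ref{bezbu}; and injectivity by pulling back the cohomology relation from $K(A)$. The only difference is cosmetic: for the injectivity step the paper simply invokes the injectivity of $H^2(M,k_A)$ from \ref{biplik}, whereas you unpack that statement by hand (splitting $g=g_1-g_2$ and rewriting the coboundary identity inside $A$), which amounts to reproving the $n=2$ case of \ref{biplik}.
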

\begin{proof} If two cocycles $f, f' \colon M \times M \lr A$ are cohomologous, then so are $k_Af$ and $k_Af'$, and therefore $E_{k_Af}$ is congruent to $E_{k_Af'}$ (see \ref{mamikose} and \ref{bez}). This congruence, in view of (4.28), yields a congruence $k_AE_f\equiv k_AE_{f'}$, whence, by Proposition \ref{goyo}, we conclude that $E_f$ and $E_{f'}$ are similar. Thus, $ \theta(M,A)$ is well-defined. If $E$ is a Schreier extension of $A$ by $M$ and $f$ is one of its factor sets (see \ref{mamikosebi}), then $E_{f}\equiv E$ (see \ref{bezbu}), and therefore $E\sim E_f$ by (4.23). Hence, $\theta(M,A)$ is a surjective map of pointed sets ($\theta(M,A)(cl(0))=cl(E_0)=cl(0)$ since $E_0\sim0$). Now suppose that $f_1, f_2 \colon M \times M \lr A$ are 2-cocycles such that $E_{f_1}\sim E_{f_2}$. This similarity, by Proposition \ref{goyo} and (4.28), implies that $E_{k_Af_1}\equiv E_{k_Af_2}$.  Hence, the  cocycles $k_Af_1 ,\; k_Af_2 \colon M \times M \lr K(A)$ are cohomologous (see \ref{mamikose} and \ref{bezb}). If $A$ is a cancellative $M$-semimodule, then the homomorphism $H^2(M,k_A):H^2(M,A)\lr H^2(M,K(A)), \;H^2(M,k_A)(cl(f))=cl(k_Af)$, is injective (see \ref{biplik}), and therefore the 2-cocycles $f_1, f_2 \colon M \times M \lr A$ are cohomologous as well.  Thus, $\theta(M,A)$ is a bijection of pointed sets for any cancellative $M$-semimodule $A$.\end{proof}

Note that the map $\theta(M,A)$ is functorial in $A$ (by (4.23) and (4.27)) and so is the map $\zeta(M,A)$ of Theorem \ref{bix} (by (4.27)).
\vskip+1.5mm
Theorems \ref{bix} and \ref{bixo}, together with the commutative diagrams (3.11) and (4.26), yield a commutative diagram

$$\xymatrix{H^2(M,A) \ar[rrr]^{\theta(M,A)}\ar[dd]_{H^2(M,k_A)} & & & E(M,A) \ar[dd]^{E(M,k_A)} \\ & \mathscr{H}^2(M,A)\ar[r]^{\zeta(M,A)}                    \ar[ul]_{j^2} \ar[dl]^{k_A^2}  & \mathscr{E}(M,A) \ar[ur]^{j} \ar[dr]_{k_A^*} \\ H^2(M,K(A)) \ar[rrr]^{\theta(M,K(A))} & & & E(M,K(A))}$$
for any $M$-semimodule $A$. Here $\zeta(M,A)$ and $\theta(M,K(A))$ are bijections, $\theta(M,A)$, $j$ and $j^2$ surjections, and $E(M,k_A)$ is an injection. If $A$ is a cancellative $M$-semimodule, then $\theta(M,A)$ is a bijection and $H^2(M,k_A)$ an injection, and if $A$ is an $M$-module, then $H^2(M,k_A),\; j^2,\; k_A^2,\; k_A^*,\; j$ and $E(M,k_A)$ are all identity maps.
\vskip+1.5mm

\section{An application to group extensions}
We end the paper with an application of $H^3(\Pi,A)$, where $\Pi$ is a group and $A$ a $\Pi$-semimodule, to a certain group extension problem.

Recall \cite{EM,M} that an abstract kernel is a triple $(\Pi, G, \psi)$ in which $\Pi$ and $G$ are groups and $\psi$ is a homomorphism from $\Pi$ to $\text{Aut}(G)/\text{In}(G)$. The homomorphism $\psi:\Pi\lr \text{Aut}(G)/\text{In}(G)$ induces a $\Pi$-module structure on the centre  $C$ of $G$ ($xc=\varphi(x)c, x\in\Pi, c\in C $ and $\varphi(x)\in \psi(x)$), and this $\Pi$-module is called the centre  of the abstract kernel $(\Pi, G, \psi)$. Any extension $\xymatrix{E:G\;\ar@{>->}[r]&B\ar@{->>}[r]&\Pi}$ of groups gives rise, by conjugation in $B$, to a homomorphism $\psi:\Pi\lr \text{Aut}(G)/\text{In}(G)$, called the abstract kernel of $E$. One says that an abstract kernel $(\Pi, G, \psi)$ has an extension if there exists an extension $\xymatrix{E:G\;\ar@{>->}[r]&B\ar@{->>}[r]&\Pi}$ whose abstract kernel coincides with $(\Pi, G, \psi)$.

In \cite{EM}, Eilenberg and Mac Lane showed that any abstract kernel $(\Pi, G, \psi)$ with centre $C$ determines  a well-defined $3$-dimensional cohomology class $\text{Obs}(\Pi, G, \psi)$ of $\Pi$ with coefficients in $C$, called the obstruction class of $(\Pi, G, \psi)$, and proved

\begin{theo}[{\cite[Theorem 8.1]{EM}}]\label{salome} An abstract kernel $(\Pi, G, \psi)$ has an extension if and only if $Obs(\Pi, G, \psi)=0$.\end{theo}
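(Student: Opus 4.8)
The plan is to reconstruct the classical Eilenberg--Mac Lane argument, adapting it to the present framework where the target of the obstruction class is a $\Pi$-module (the centre $C$) and the relevant cohomology is ordinary $H^3(\Pi,C)$ (which, by \ref{bipli}, agrees with both $H^3$ and $\mathscr{H}^3$ since $C$ is a group). First I would spell out the construction of $\mathrm{Obs}(\Pi,G,\psi)$: choose for each $x\in\Pi$ an automorphism $\varphi(x)\in\psi(x)$ with $\varphi(1)=1_G$; since $\varphi(x)\varphi(y)$ and $\varphi(xy)$ induce the same element of $\mathrm{Aut}(G)/\mathrm{In}(G)$, pick $h(x,y)\in G$ with $\varphi(x)\varphi(y)=\mu_{h(x,y)}\,\varphi(xy)$, where $\mu_g$ denotes conjugation by $g$; normalise so that $h(x,1)=h(1,y)=1$. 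Associativity of composition of automorphisms forces the element $k(x,y,z)$ defined by comparing $\varphi(x)(h(y,z))\,h(x,yz)$ with $h(x,y)\,h(xy,z)$ to lie in the centre $C$, and one checks $k$ is a $3$-cocycle of $\Pi$ with coefficients in $C$; changing the choices of $\varphi$ and $h$ alters $k$ by a coboundary, so $\mathrm{Obs}(\Pi,G,\psi):=\cl(k)\in H^3(\Pi,C)$ is well defined.

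Next I would prove the ``only if'' direction: suppose $\xymatrix{E:G\;\ar@{>->}[r]&B\ar@{->>}[r]&\Pi}$ is an extension realising $(\Pi,G,\psi)$. Choosing a set-theoretic section $s:\Pi\to B$ with $s(1)=1$, conjugation by $s(x)$ is an automorphism $\varphi(x)$ of $G$ in the class $\psi(x)$, and $s(x)s(y)s(xy)^{-1}\in G$ serves as $h(x,y)$; then the very computation that in the group case shows the associated $3$-cocycle is a coboundary (it equals the ``difference'' measuring associativity of the already-associative product in $B$) shows $k=0$ in cohomology, i.e. $\mathrm{Obs}(\Pi,G,\psi)=0$. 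Conversely, for the ``if'' direction, given $\varphi$ and $h$ as above with $k$ a coboundary, replace $h$ by a cohomologous choice so that $k\equiv 0$ on the nose, i.e. $\varphi(x)(h(y,z))\,h(x,yz)=h(x,y)\,h(xy,z)$ for all $x,y,z$. Then define $B=G\times\Pi$ as a set with multiplication $(g_1,x)(g_2,y)=(g_1\,\varphi(x)(g_2)\,h(x,y),\,xy)$; the normalised cocycle identities for $h$ (together with the twisted-homomorphism property $\varphi(x)\varphi(y)=\mu_{h(x,y)}\varphi(xy)$) make this associative with identity $(1,1)$ and inverses, and the evident maps $G\rightarrowtail B\twoheadrightarrow\Pi$ form a group extension whose abstract kernel is exactly $\psi$.

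The main obstacle, and the step deserving the most care, is the bookkeeping showing that the obstruction cocycle $k$ is (a) central-valued, (b) a genuine $3$-cocycle, and (c) independent of all choices up to coboundary — this is where the non-abelianity of $G$ interacts with the $\Pi$-action and where sign/ordering conventions for conjugation must be tracked precisely; everything else is a routine verification of associativity in the constructed $B$. Since the coefficient module $C$ is an ordinary $\Pi$-module, there is no semimodule subtlety here: by \ref{bipli} the class lives in the classical $H^3(\Pi,C)=\mathscr{H}^3(\Pi,C)$, so I can invoke the standard Eilenberg--Mac Lane machinery verbatim. I would therefore present the argument in the classical language and remark that the only role of the ambient theory is to identify $H^3(\Pi,C)$ with the Eilenberg--Mac Lane group, deferring to \cite{EM} for the purely group-theoretic details of (a)--(c) while indicating the key identities.
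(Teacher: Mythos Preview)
Your reconstruction of the classical Eilenberg--Mac Lane argument is correct in outline: the construction of the obstruction cocycle from choices $\varphi(x)$ and $h(x,y)$, the verification that an actual extension forces $k\equiv 1$ identically (in fact pointwise, not merely in cohomology---associativity in $B$ gives $\varphi(x)(h(y,z))\,h(x,yz)=h(x,y)\,h(xy,z)$ on the nose), and the converse crossed-product construction on $G\times\Pi$ once $h$ has been adjusted to kill $k$, are all standard and sound.

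However, the paper itself offers no proof of this theorem at all. It is stated purely as a citation of \cite[Theorem 8.1]{EM} and is used as a black box in the subsequent discussion (Proposition~5.2, Proposition~\ref{niko}, and the final corollary). So there is nothing to compare your argument against: you have supplied a proof where the paper deliberately defers to the literature. Your closing remark---that one may invoke the Eilenberg--Mac Lane machinery verbatim since $C$ is a genuine $\Pi$-module and \ref{bipli} identifies the cohomologies---is exactly the extent of what the paper does; everything else in your proposal, while correct, is surplus to the paper's own treatment.
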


It follows, in particular, that if the third cohomology group $H^3(\Pi,C)$ of a group $\Pi$ with coefficients in a $\Pi$-module $C$ vanishes, then any abstract kernel $(\Pi, G, \psi)$ with centre $C$ has an extension.
\vskip+1.5mm
Now suppose that $C$ is the $\Pi$-module completion of a cancellative $\Pi$-semimodule $A$, i.e., $A$ is a $\Pi$-subsemimodule of the $\Pi$-module $C$ and $K(A)=C$. Example \ref{imri} shows that it may happen that $H^3(\Pi,A)=0$ while $H^3(\Pi,C)\neq0$. Hence, it may be useful to have the following immediate consequence of Theorem \ref{salome}.

\begin{prop} Let $A$ be a $\Pi$-subsemimodule of a $\Pi$-module $C$ such that $K(A)=C$. If $H^3(\Pi,A)=0$, then any abstract kernel $(\Pi, G, \psi)$ with centre $C$ whose obstruction class $Obs(\Pi, G, \psi)$ contains a $3$-cocycle of $\Pi$ with values in $A$ has an extension. \end{prop}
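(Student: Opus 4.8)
The plan is to reduce the statement directly to Theorem~\ref{salome} by moving the obstruction class into the right group. Let $(\Pi, G, \psi)$ be an abstract kernel with centre $C$, and suppose its obstruction class $\mathrm{Obs}(\Pi, G, \psi) \in H^3(\Pi, C)$ contains a $3$-cocycle $h \colon \Pi^3 \lr C$ all of whose values lie in the $\Pi$-subsemimodule $A \subseteq C$. First I would observe that $h$ is then (the image under $F^3(\Pi, k_A) = F^3(\Pi, \iota)$, where $\iota \colon A \hookrightarrow C$ is the inclusion, of) a $3$-cocycle of the cochain complex $F(\Pi, A)$: indeed, being a $3$-cocycle in $F(\Pi, C)$ means $d^3_+ h = d^3_- h$, and since the differentials $d^n_{\pm}$ are defined by the same formulas on $F(\Pi, A)$ and $F(\Pi, C)$ and $\iota$ is an injective $\Pi$-homomorphism, the identity $d^3_+ h = d^3_- h$ already holds in $F^4(\Pi, A)$. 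So $h$ determines a class $cl(h) \in H^3(\Pi, A)$.

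Next I would use the hypothesis $H^3(\Pi, A) = 0$. By the description of $H^3$ as $H^3(F(\Pi, A))$ and of its congruence $\rho^3$ (see Definition after \ref{ipl} and \ref{ipl}), $H^3(\Pi, A) = 0$ forces $cl(h) = cl(0)$ in $H^3(\Pi, A)$, i.e. $h \;\rho^3(F(\Pi,A))\; 0$. Applying the induced homomorphism $H^3(\Pi, k_A) \colon H^3(\Pi, A) \lr H^3(\Pi, C)$, which is functorial and sends $cl(h) \mapsto cl(k_A h) = cl(\iota h) = cl(h) \in H^3(\Pi, C)$, we get that the image of $cl(h)$ in $H^3(\Pi, C)$ is zero. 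But that image is exactly $\mathrm{Obs}(\Pi, G, \psi)$, since $h$ represents the obstruction class and $\iota h = h$ as a function $\Pi^3 \lr C$. Hence $\mathrm{Obs}(\Pi, G, \psi) = 0$ in $H^3(\Pi, C)$, and Theorem~\ref{salome} yields an extension of $(\Pi, G, \psi)$.

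The main point to get right is the bookkeeping between the two cohomology theories: $H^3(\Pi, C)$ for the $\Pi$-module $C$ is an \emph{ordinary} Eilenberg--Mac Lane cohomology group (by \ref{bipli}), so $\mathrm{Obs}(\Pi, G, \psi)$ vanishing there means exactly that the $3$-cocycle $h$ is an ordinary $3$-coboundary with values in $C$; whereas $H^3(\Pi, A)$ is the new cohomology monoid, with its congruence $\rho^3$ phrased additively with two auxiliary cochains. I expect the only genuine subtlety is checking that the class of $h$ in $H^3(\Pi,A)$ maps, under $H^3(\Pi, k_A)$, precisely onto $\mathrm{Obs}(\Pi,G,\psi)$ in $H^3(\Pi, C)$ — but this is immediate once one notes that $h$ is literally the same function before and after applying $\iota$, and that $H^3(\Pi, k_A)(cl(f)) = cl(k_A f)$ by the formula in \ref{iplik}. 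Everything else is formal: the hypothesis kills $cl(h)$ already in $H^3(\Pi, A)$, so a fortiori its image dies in $H^3(\Pi, C)$, and then Eilenberg--Mac Lane's Theorem~\ref{salome} applies verbatim. No new computation is needed; the proposition is really just the observation that Example~\ref{imri} makes the hypothesis ``$H^3(\Pi, A) = 0$'' strictly weaker than ``$H^3(\Pi, C) = 0$'', so this refinement of the classical sufficient condition has nonvacuous content.
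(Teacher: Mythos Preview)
Your proposal is correct and matches the paper's intent: the paper states this proposition as an ``immediate consequence of Theorem~\ref{salome}'' without giving a proof, and your argument spells out exactly the reduction implicit in that remark. The only step worth noting is that once $cl(h)=cl(0)$ in $H^3(\Pi,A)$, the relation $\rho^3$ yields $g_1,g_2\in F^2(\Pi,A)$ with $h+d^2_+(g_1)+d^2_-(g_2)=d^2_+(g_2)+d^2_-(g_1)$, and in the $\Pi$-module $C$ this rewrites as $h=(d^2_+-d^2_-)(g_2-g_1)$, i.e.\ $h$ is an ordinary coboundary; your functoriality argument via $H^3(\Pi,k_A)$ accomplishes the same thing.
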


From now on, $S$ denotes an additively written (non-abelian) monoid.
\vskip+1.5mm
Every invertible element $u$ of a monoid $S$ induces, by conjugation with $u$, an inner automorphism $\mu_u$ of $S$,\; $\mu_u(s)=u+s-u,\; s\in S$. The set $\text{In}(S)$ of all inner automorphisms of $S$ forms a normal subgroup in the group $\text{Aut}(S)$, and hence gives rise to the quotient group $\text{Aut}(S)/\text{In}(S)$. If $\phi:\Pi\lr \text{Aut}(S)/\text{In}(S)$ is a homomorphism of groups, then the centre $A$ of $S$ can be regarded as a $\Pi$-semimodule, $xa=\varphi(x)a$ for any choice of automorphisms $\varphi(x)\in\phi(x)$, and this $\Pi$-semimodule is said to be the centre of the homomorphism $\phi$.

One says that a group $G$ is a group of left fractions of a monoid $S$ if $S$ is a submonoid of the group $G$ and every element of $G$ is expressible  in the form    $-s_1+ s_2$ with $s_1$ and $s_2$ in $S$. Recall that a monoid $S$ has a group of left fractions if and only if $S$ is a cancellative monoid and $(S+s)\cap(S+t)\neq\emptyset$ for all $s,t \in S$ (see e.g. \cite{CP}). A monoid satisfying these conditions is called an Ore monoid.

Let $S$ be an Ore monoid  and let $-S+S$ denote its group of left fractions. There is a group homomorphism
$$ \Phi_S:\text{Aut}(S)/\text{In}(S)\lr \text{Aut}(-S+S)/\text{In}(-S+S),\;\;\;\;\;\;\;\Phi_S(cl\varphi)=cl\wt{\varphi},$$
where $\wt{\varphi}$ is the extension of $\varphi \in \text{Aut}(S)$ to an automorphism of $-S+S, \;\wt{\varphi}(-s_1+s_2)=-{\varphi}(s_1)+
{\varphi}(s_2), \;s_1, s_2 \in S$. Consequently, every group homomorphism $\phi:\Pi\lr \text{Aut}(S)/\text{In}(S)$ with $S$ an Ore monoid  provides us with the abstract kernel $(\Pi,\; -S+S, \;\Phi_{S}\phi)$. In addition, the centre  of the homomorphism $\phi$ is a $\Pi$-subsemimodule of the centre of $(\Pi,\; -S+S, \; \Phi_{S}\phi)$.

\begin{prop}\label{niko} Let $A$ be the centre of a group homomorphism  $\phi:\Pi\lr \text{Aut}(S)/\text{In}(S)$ with $S$ an Ore monoid. Then $Obs(\Pi,\; -S+S, \; \Phi_{S}\phi)$ contains a $3$-cocycle with values in $U(A)$.\end{prop}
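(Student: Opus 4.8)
The plan is to run the Eilenberg--Mac Lane obstruction construction \cite{EM} for the abstract kernel $(\Pi,\,-S+S,\,\Phi_S\phi)$, but with a system of representatives chosen inside $\text{Aut}(S)$ rather than $\text{Aut}(-S+S)$; this forces the resulting normalized $3$-cocycle to have all of its values in $S$, and a small amount of extra bookkeeping then pins them down inside $U(A)$.

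First I would pick, for every $x\in\Pi$, an automorphism $\varphi(x)\in\text{Aut}(S)$ with $\cl\,\varphi(x)=\phi(x)$ and $\varphi(1)=1_S$. Since $\phi$ is a homomorphism, $\varphi(x)\varphi(y)$ and $\varphi(xy)$ lie in the same coset of $\text{In}(S)$, so there is an invertible element $h(x,y)\in S$ with $\varphi(x)\varphi(y)=\mu_{h(x,y)}\varphi(xy)$; I normalize so that $h(x,1)=h(1,y)=0$. Passing to $-S+S$ via the extension map $\varphi\mapsto\wt\varphi$ and using the two routine identities $\wt{\varphi\psi}=\wt\varphi\,\wt\psi$ and $\wt{\mu_u}=\mu_u$ (the latter because conjugation by $u\in S$ inside $-S+S$ restricts to $\mu_u$ on $S$, and the extension is unique), I obtain $\wt{\varphi(x)}\,\wt{\varphi(y)}=\mu_{h(x,y)}\wt{\varphi(xy)}$ in $\text{Aut}(-S+S)$. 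Thus $\{\wt{\varphi(x)}\}$ is a normalized system of representatives for $(\Pi,\,-S+S,\,\Phi_S\phi)$ whose associated multiplier function is $h$, with values among the units of $S$ viewed as a subset of $-S+S$.

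Next I would quote the Eilenberg--Mac Lane formula: the obstruction $3$-cocycle attached to these data is
$$k(x,y,z)=\wt{\varphi(x)}\big(h(y,z)\big)+h(x,yz)-h(xy,z)-h(x,y),$$
a normalized $3$-cocycle of $\Pi$ with values in the centre $C$ of $-S+S$, whose class is $Obs(\Pi,\,-S+S,\,\Phi_S\phi)$. Now $\wt{\varphi(x)}(h(y,z))=\varphi(x)(h(y,z))$ is the image under an automorphism of $S$ of a unit of $S$, hence again a unit of $S$, and the negative of a unit of $S$ lies in $S$; so $k(x,y,z)$ is a finite sum of units of $S$, which means it lies in $S$ and is itself a unit of $S$. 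At the same time $k(x,y,z)\in C$ is central in $-S+S$, hence central in $S$, so $k(x,y,z)$ lies in the centre $A$ of $S$. Since an element of $A$ that is a unit of $S$ has its inverse again in $A$, we get $A\cap\{\text{units of }S\}=U(A)$, and therefore $k(x,y,z)\in U(A)$ for all $x,y,z$. As $A$ is a $\Pi$-subsemimodule of $C$ (already noted in the text) and $U(A)$ is a $\Pi$-module, the cocycle identity, valid in $C$, holds in $U(A)$, so $k$ is a normalized $3$-cocycle of $\Pi$ with values in $U(A)$ (ordinary, equivalently in the sense of \ref{bipli}) representing $Obs(\Pi,\,-S+S,\,\Phi_S\phi)$, which is exactly the assertion.

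The main obstacle is not a single hard step but keeping the non-abelian ``additive'' conjugation calculus straight: checking $\wt{\mu_u}=\mu_u$ and $\wt{\varphi\psi}=\wt\varphi\,\wt\psi$ so that the multiplier $h$ genuinely descends from $\text{Aut}(S)$, recalling the precise Eilenberg--Mac Lane obstruction formula together with its centrality assertion, and verifying the two small monoid facts used at the end, namely that the units of $S$ form a subgroup of $-S+S$ and that $A\cap\{\text{units of }S\}=U(A)$.
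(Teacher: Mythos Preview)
Your argument is correct and follows the same route as the paper: choose representatives $\varphi(x)$ in $\text{Aut}(S)$, pick invertible multipliers $h(x,y)\in U(S)$, extend to $-S+S$, and read off the Eilenberg--Mac Lane obstruction cocycle, observing that all ingredients stay in $U(S)$ so that $k$ lands in $U(A)$. The only difference is cosmetic---you spell out the verifications $\wt{\mu_u}=\mu_u$, $\wt{\varphi\psi}=\wt\varphi\,\wt\psi$, and the passage from ``central in $-S+S$ and a unit of $S$'' to ``element of $U(A)$'' more explicitly than the paper does.
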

\begin{proof}In each automorphism class $\phi(x)$ select an automorphism $\varphi(x)$; in particular, choose $\varphi(1)=1$. Further, for each pair $x,y \in \Pi,$ select $f(x,y)\in U(S)$ such that $\varphi(x)\varphi(y)=\mu_{f(x,y)}\varphi(xy)$; in particular, choose $f(x,1)=0=f(1,y)$. Applying $\sim$, we obtain $\wt{\varphi(x)}\wt{\varphi(y)}=\wt{\mu_{f(x,y)}}\wt{\varphi(xy)}$, where $\wt{\mu_{f(x,y)}}$ is the inner automorphism of $-S+S$ induced by $f(x,y)$. Then, by \cite{EM,M}, there is a $3$-cocycle $k$ of $\Pi$ with coefficients in the centre of $(\Pi,\; -S+S, \; \Phi_{S}\phi)$ such that
$$\wt{\varphi(x)}(f(y,z))+f(x,yz)=k(x,y,z)+f(x,y)+f(xy,z)$$
for all $x,y,z \in \Pi$, and $\text{Obs}(\Pi,\; -S+S, \; \Phi_{S}\phi)=cl(k)$. Since $\wt{\varphi(x)}(f(y,z))=\varphi(x)(f(y,z))\in U(S)$, we have $k(x,y,z)\in U(S)$, whence we conclude that in fact the $3$-cocycle $k$ takes values in $U(A)$.\end{proof}

 This proposition together with Theorem \ref{salome} implies

\begin{cor}Let $A$ be a cancellative $\Pi$-semimodule. If $H^3(\Pi,A)=0$, then for any group homomorphism $\phi:\Pi\lr \text{Aut}(S)/\text{In}(S)$ with $S$ an Ore monoid  and with centre $A$, the abstract kernel $(\Pi,\; -S+S, \; \Phi_{S}\phi)$ has an extension.\end{cor}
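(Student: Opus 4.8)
The plan is to prove that the obstruction class $\text{Obs}(\Pi,\,-S+S,\,\Phi_S\phi)$ vanishes and then quote Theorem \ref{salome}. Recall that this class lies in the ordinary third cohomology group $H^3(\Pi,C)$, where $C$ is the centre of the abstract kernel $(\Pi,\,-S+S,\,\Phi_S\phi)$; as noted just before Proposition \ref{niko}, $C$ is a $\Pi$-module that contains $A$ as a $\Pi$-subsemimodule, the $\Pi$-action on $C$ restricting to the given action on $A$.

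First I would invoke Proposition \ref{niko}: it yields a $3$-cocycle $k$ of $\Pi$ with $\text{Obs}(\Pi,\,-S+S,\,\Phi_S\phi)=cl(k)$ and with all values of $k$ in $U(A)$. The next step is to see that $k$ is genuinely an element of $Z^3(F(\Pi,A))$, so that $cl(k)$ makes sense in $H^3(\Pi,A)$. Here $U(A)$ is a $\Pi$-submodule of the semimodule $A$ (each $\varphi(x)$, being an automorphism of $S$, carries $U(A)$ into $U(A)$), and $k$ is normalized because the data in the proof of Proposition \ref{niko} are chosen normalized ($\varphi(1)=1$ and $f(x,1)=0=f(1,y)$); hence $k\in F^3(\Pi,A)$. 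Furthermore $d^3_+(k)=d^3_-(k)$ in $F^4(\Pi,A)$: applying the $\pm$-morphism $F(\Pi,A)\lr F(\Pi,C)$ induced by the inclusion $\iota\colon A\hookrightarrow C$ turns this into the identity $d^3_+(\iota k)=d^3_-(\iota k)$ in $F^4(\Pi,C)$, which is valid since $k$, viewed in $C$, is exactly the Eilenberg-Mac Lane $3$-cocycle of Proposition \ref{niko}; because $\iota$ is injective, so is the induced map $F^4(\Pi,A)\lr F^4(\Pi,C)$, and the equality descends back to $F^4(\Pi,A)$.

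Now the decisive step is immediate. By hypothesis $H^3(\Pi,A)=0$, so $cl(k)=0$ in $H^3(\Pi,A)$. Applying the functor $H^3(\Pi,-)$ to the $\Pi$-semimodule homomorphism $\iota\colon A\hookrightarrow C$ gives a monoid homomorphism $H^3(\Pi,\iota)\colon H^3(\Pi,A)\lr H^3(\Pi,C)$ with $H^3(\Pi,\iota)(cl(k))=cl(\iota k)=cl(k)$; hence $cl(k)=H^3(\Pi,\iota)(0)=0$ in $H^3(\Pi,C)$. Since $C$ is a $\Pi$-module, $H^3(\Pi,C)$ coincides with the Eilenberg-Mac Lane cohomology group in which the obstruction class lives (see \ref{bipli}), so $\text{Obs}(\Pi,\,-S+S,\,\Phi_S\phi)=cl(k)=0$. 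Theorem \ref{salome} then produces the desired extension.

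The one point that calls for attention --- and it is routine bookkeeping rather than a genuine obstacle --- is the identification made in the second paragraph: verifying that the obstruction cocycle of Proposition \ref{niko}, once regarded as a map into the $\Pi$-semimodule $A$, really belongs to $Z^3(F(\Pi,A))$. This comes down to the compatibility of the $\Pi$-actions on $U(A)$, $A$ and $C$, and to the injectivity of $A\hookrightarrow C$, which is what lets one pass from $d^3_+-d^3_-$ vanishing over $C$ to $d^3_+=d^3_-$ over $A$. After that, the argument is pure functoriality of $H^3(\Pi,-)$ on $\Pi$-semimodules; in particular, no appeal to $C=K(A)$ is needed.
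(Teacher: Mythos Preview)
Your proof is correct and follows essentially the paper's route: the paper simply states that the corollary is implied by Proposition \ref{niko} together with Theorem \ref{salome}, and you have spelled out exactly how---use Proposition \ref{niko} to realize the obstruction by a cocycle $k$ with values in $U(A)\subset A$, observe that $k\in Z^3(F(\Pi,A))$, use $H^3(\Pi,A)=0$ to kill its class there, and push forward along $A\hookrightarrow C$ to kill the obstruction in $H^3(\Pi,C)$. Your remark that no identification $C=K(A)$ is needed is accurate and slightly sharpens the analogous Proposition 5.2, but this is a cosmetic refinement rather than a different argument.
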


This corollary is of some interest since it may happen that there is a cancellative $\Pi$-semimodule $A$, with $\Pi$ a group, such that $H^3(\Pi,A)=0$ while $H^3(\Pi,U(A))$ and $H^3(\Pi,K(A))$ both do not vanish (see Example \ref{imri}).

\vspace{1cm}

\vspace{1cm}

\begin{tabular}{l}
{\sc A.Razmadze Mathematical Institute,}\\
{\sc I. Javakhishvili Tbilisi State University}\\
{\sc Tamarashvili Str. 6, Tbilisi 0177, Georgia}\\

\vspace{1mm}

\\[-10pt]

\emph{E-mail address}: \texttt{alex.patchkoria@gmail.com}

\end{tabular}

\end{document}